\providecommand{\tabularnewline}{\\}
\theoremstyle{plain}
\newtheorem{thm}{\protect\theoremname}
\theoremstyle{plain}
\theoremstyle{plain}
\newtheorem{lem}[thm]{\protect\lemmaname}
\newenvironment{proof}[1][\protect\proofname]{\par
\normalfont\topsep6\p@\@plus6\p@\relax
\trivlist
\itemindent\parindent
\item[\hskip\labelsep
\scshape
#1]\ignorespaces
}{%
\endtrivlist\@endpefalse
}
\providecommand{\proofname}{Proof}
\theoremstyle{plain}
\providecommand{\lemmaname}{Lemma}
\providecommand{\propositionname}{Proposition}
\providecommand{\theoremname}{Theorem}
\newtheorem{proposition}{Proposition}[section]
\newtheorem{theorem}[proposition]{Theorem}
\newtheorem{corollary}[proposition]{Corollary}
\newtheorem{lemma}[proposition]{Lemma}
\newtheorem{remark}[proposition]{Remark}
\newcommand{\R}{\mathbb{R}}
\newcommand{\N}{\mathbb{N}}
\newcommand{\im}{{\rm im}\,}
\renewcommand{\ker}{{\rm ker}\,}
\newcommand{\rank}{{\rm rank}\,}
\newcommand{\dom}{{\rm dom}\,}
\newcommand{\argmin}{\operatorname{argmin}}
\begin{document}

\title{A least-squares collocation method for nonlinear higher-index differential-algebraic equations}

\author[KTH]{Michael Hanke}

\ead{hanke@nada.kth.se}

\address[KTH]{KTH Royal Institute of Technology, School of Engineering Sciences, 10044 Stockholm, Sweden}

\author[HUB]{Roswitha M{\"a}rz}

\ead{maerz@math.hu-berlin.de}

\address[HUB]{Humboldt University of Berlin, Institute of Mathematics, D-10099
Berlin, Germany}

\begin{abstract}
We introduce a direct numerical treatment of nonlinear higher-index differential-algebraic equations by means of overdetermined polynomial least-squares collocation. The procedure is not much more computationally expensive than standard collocation methods for regular  ordinary differential equations. The numerical experiments show impressive results. In contrast, the theoretical basic concept turns out to be considerably challenging.  So far, quite recently convergence proofs for linear problems have been published. In the present paper we come up to a first  convergence  result for nonlinear problems.
\end{abstract}
\begin{keyword}
differential-algebraic equation, higher-index, essentially ill-posed problem, overdetermined collocation, polynomial collocation, nonlinear problem
\end{keyword}

\maketitle

\section{Introduction}\label{s.Introduction}

For regular ordinary differential equations and index-1 differential-algebraic equations \textit{standard collocation methods} which rely on closed discretized systems\footnote{The number of unknowns equals the number of equations.} are known to work well. Moreover, Hessenberg form index-2 differential-algebraic equations can be treated successfully by so-called \textit{projected collocation methods} that complement standard collocation with an additional updating of the differential solution component by a projection step. This goes along with the well-posedness of the related initial and boundary value problems in natural settings; we refer to  \cite{LMW} for a  detailed survey. In contrast,  higher-index differential-algebraic equations lead to ill-posed\footnote{More precisely: Essentially ill-posed in Tichonov's sense, that is, the related operators feature nonclosed ranges.} initial and boundary value problems, and standard collocation methods  necessarily fail unless an elaborate index-reducing 
preprocessing is incorporated, which utilizes derivative array systems.

Recently (\cite{HMT,HMTWW}) first promising experiments concerning an least-squares overdetermined polynomial collocation directly applied to the DAE without any preprocessing have been reported. The theoretical justification appears to be quite challenging. So far, only sufficient convergence conditions are obtained for linear problems \cite{HMT,HM,HMTWW}. In the present paper we provide a first proof for nonlinear problems.
\medskip

The paper is organized as follows: In Section~\ref{s.Technicalities} we state the problem in detail. Then we provide a Hilbert space setting in Section \ref{s.Hilbert_setting}. This setting is more comfortable for the treatment of the given ill-posed problems. In Section \ref{s.single_part}, we introduce 
and investigate a kind of Newton-iteration related to a single 
partition, which uses bounded outer inverses as discussed in \cite{NaCh93} 
and which serves in the end as background for the Gauss-Newton iteration 
applied to an overdetermined collocation system. Then, we consider nested multiple partitions to ensure convergence of the iteration-projection method in Section \ref{s.Iteration}. The examples in Section \ref{s.E}  confirm the capability of the approach, but, having said that, they also indicate that our sufficient convergence conditions seem to be too unsubtle still. Finally, we provide some remarks and conclusions.

We use the symbol $\rVert\cdot\lVert$ for different function and operator norms. In general, in the given  context things will be unambiguous. Only on certain places, to prevent maybe imminent confusions we indicate the special norms by the corresponding subscripts, e.g., $\rVert\cdot\lVert_{L^{2}}$.

\section*{Some notations and abbreviations}

\begin{tabular}{ll}
$\R$& set of real numbers\\
$\mathcal L(\R^s,\R^n)$& space of linear operators from $\R^s$ to $\R^n$, also set of $n\times s $ - matrices with real entries\\ 
$\mathcal C([a,b], \R^{m})$& space of continuous functions mapping $[a,b]$ into $\R^{m}$\\
$\mathcal C^{s}([a,b], \R^{m})$& space of $s$-times continuously differentiable functions mapping $[a,b]$ into $\R^{m}$\\
$L^{2}:=L^{2}((a,b), \R^{m})$& Lebesque space of  functions mapping $(a,b)$ into $\R^{m}$\\
$H^{k}:=H^{k}((a,b), \R^{m})$& $:= W^{k,\,2}((a,b), \R^{m})$, Sobolev space of functions mapping $(a,b)$ into $\R^{m}$\\
$H^{1}_{D}:=H^{1}_{D}((a,b), \R^{m})$& $:=\{x\in L^{2} :\,Dx\in H^1$\} \\
$K^-$& generalized inverse of the operator $K$:\; $KK^-K=K$, $K^-KK^-=K^-$\\
$K^+$& Moore-Penrose inverse of $K$\\
$\ker K$& nullspace (kernel) of  $K$\\
$\im K$&  range (image) of $K$\\
$\langle \cdot,\cdot \rangle$& Euclidean inner product in $\R^m$\\
$( \cdot,\cdot )$& inner product in a function space\\
$|\cdot|$& Euclidean vector norm and spectral norm of a matrix\\
$\rVert\cdot\lVert$& norm of function space element and operator norm\\
$\oplus$& topological direct sum\\
$\mathfrak P_{N}$& set of all polynomials of degree less than or equal to $N$\\
DAE, DAO&differential-algebraic equation, differential-algebraic operator\\
ODE&ordinary differential equation\\
IVP, BVP&initial value problem, boundary value problem
\end{tabular}

\section{The issue and basic technicalities}\label{s.Technicalities}

We deal with IVPs and BVPs given in the form
\begin{align}
 f((Dx)'(t), x(t),t)&=0,\quad t\in [a, b],\label{DAE}\\
 g(x(a),x(b))&=0,\label{BC}
\end{align}
with $[a,b]$ being a compact interval,  $D=[I\; 0]\in \mathcal L(\R^{m},\R^{k})$, $\rank D=k$, and data $f(y,x,t)\in\R^{m}, y\in \R^{k}, x\in \mathcal D_{x}\subseteq \R^{m}, t\in \mathcal D_{t}\subset\R$, $ \mathcal D_{t}\supset [a,b]$, $g(u,v)\in \R^{l}, u,v\in\R^{m}$. The functions $f$ and $g$ are supposed to be at least continuous together with their partial derivatives $f_{y}, f_{x}, g_{u},g_{v}$.

We assume that the BVP (\ref{DAE}), (\ref{BC}) has the solution $x_{*}:[a,b]\rightarrow\R^{m}$ to be approximated. $x_{*}$ is supposed to be continuous  with continuously differentiable part $Dx_{*}$. Later on, among others for obtaining convergence orders, additional smoothness will be required.

Moreover, the DAE (\ref{DAE}) is supposed to be regular with (tractability) index $\mu\in\N$ and characteristics 
$0<r_{0}\leq\cdots\leq r_{\mu-1}<r_{m}=m$ around $x_{*}$, that means, the graph $\{(x_{*}(t),t):t\in [a,b]\}$ resides within a regularity region having these characteristics (e.g., \cite[Definition 3.28]{CRR}).
Note that then the derivative $(Dx)'$ is properly involved in the DAE (\ref{DAE}) so that $f_{y}(y,x,t)$ has full column-rank $k$.

Furthermore, in condition (\ref{BC}), we apply $l=m-\sum_{i=0}^{\mu-1}(m-r_{i})\geq 0$ which is the dynamical degree of freedom of the DAE. Recall that regular ODEs are indicated by $l=k=m$, regular index-1 DAEs by $l=k<m$, but higher-index DAEs by $l<k<m$. We are mainly interested in the last case. We further suppose the function $g$ to satisfy the relation
\begin{align}\label{g}
 g(u,v)=g(D^{+}Du,D^{+}Dv), \quad u,v\in R^{m},
\end{align}
so that the initial or boundary condition (\ref{BC}) actually applies to the differentiable component $Dx$ only. 

Together with the BVP (\ref{DAE}),(\ref{BC}) we consider the linear BVP,
\begin{align}
 A_{*}(t)(Dz)'(t)+B_{*}(t)z(t)&=q(t),\quad t\in [a,b],\label{linDAE}\\  
  G_{*\,a}z(a)+G_{*\,b}z(b)&=d,\label{linBC}
\end{align}
with
\begin{align*}
 &A_{*}(t):=f_{y}((Dx_{*})'(t),x_{*}(t),t),\quad B_{*}(t):=f_{x}((Dx_{*})'(t),x_{*}(t),t),\quad t\in [a,b],\\
 &G_{*\,a}:=g_{u}(x_{*}(a),x_{*}(b)),\quad G_{*\,b}:=g_{v}(x_{*}(a),x_{*}(b)).
\end{align*}
We assume the solution $x_{*}$ and possibly the data $f$ to be sufficiently smooth so that the linearized  DAE (\ref{linDAE}) is fine in the sense of \cite[Section 2.6]{CRR}. 
Since the solution $x_{*}$ resides in a regularity region of the DAE (\ref{DAE}), the linear DAE (\ref{linDAE}) inherits the characteristic values and the index $\mu$ of the nonlinear DAE, see \cite[Page 279]{Mae2014}. Furthermore, owing to condition (\ref{g}) it holds that
\begin{align}\label{G}
 \ker D\subseteq \ker G_{*\,a},\quad \ker D\subseteq \ker G_{*\,b}.
\end{align}
Condition (\ref{BC}) is supposed to be stated in such a way that the linear BVP (\ref{linDAE}),(\ref{linBC}) features accurately stated boundary condition in the sense of \cite[Definition 2.3]{LMW}), meaning that the problems 
\begin{align}\label{homlinDAE}
 A_{*}(t)(Dz)'(t)+B_{*}(t)z(t)=0,\; t\in [a,b],\quad
  G_{*\,a}z(a)+G_{*\,b}z(b)=d,
\end{align}
are uniquely solvable for each $d\in \R^{l}$, and the solutions satisfy the inequality 
\[
 \max_{t\in [a,b]}\,\rvert z(t)\lvert \leq \kappa_{BC}\, \rvert d\lvert,
\]
with a constant $\kappa_{BC}$. In particular, the homogeneous linear BVP, that is, the so-called variational problem, has then the trivial solution only.
\bigskip

Given the partition
\begin{align}\label{partition}
 \pi: a=t_{0}<t_{1}<\cdots<t_{n}=b,
\end{align}
with stepsizes $h_{j}=t_{j}-t_{j-1}$, maximal stepsize $h_{\pi}$, and minimal stepsize $h_{\pi,\min}$.
Denote by $\mathcal M_{[r]}$ the set of all partitions $\pi$ the ratio  of the maximal stepsize by the minimal stepsize of which is uniformly bounded by the constant $r<\infty$. 

Let $\mathcal C_{\pi}([a,b],\R^{m})$ denote the space of piecewise continuous functions having breakpoints merely at the mesh points.

Next we fix a number $N\geq 1$ and introduce the space $X_\pi$ of ansatz functions to approximate the solution $x_{*}$ by piecewise polynomial functions,
\begin{align}\label{ansatz}
 X_{\pi}=\{&x\in \mathcal C_{\pi}([a,b],\R^{m}):Dx\in \mathcal C([a,b],\R^{m}),\nonumber\\
 &x_{\kappa}\lvert_{[t_{j-1},t_{j})}\in \mathfrak P_{N},\, \kappa=1,\ldots,k,\quad
 x_{\kappa}\lvert_{[t_{j-1},t_{j})}\in \mathfrak P_{N-1},\, \kappa=k+1,\ldots,m,\; j=1,\ldots,n\}.
\end{align}
This ansatz space has dimension $nNm+k$. 
Choosing values 
\begin{align*}
 0<\tau_{1}<\cdots<\tau_{M}<1
\end{align*}
we specify $M$ collocation points per subinterval, i.e.,
\begin{align*}
 t_{ji}=t_{j-1}+\tau_{i}h_{j},\quad i=1,\ldots,M,\;j=1,\ldots,n,
\end{align*}
and are then confronted with the collocation system of $nMm+l$ equations for providing an approximation $x\in X_{\pi}$, namely,
\begin{align}
 f((Dx)'(t_{ji}), x(t_{ji}),t_{ji})&=0,\quad i=1,\ldots,M,\;j=1,\ldots,n,\label{collDAE}\\
 g(x(t_{0}),x(t_{n}))&=0,\label{collBC}.
\end{align}

The choice $M=N$ corresponds to the standard polynomial collocation yielding $nNm+l$ equations, which works well for regular ODEs and index-1 DAEs, with dynamical degree $l=k=m$ and $l=k<m$, respectively (cf.~\cite{LMW}).
In contrast, higher-index DAEs feature always a dynamical degree $0\leq l<k<m$. As it is well-known, completing the collocation system by additional $k-l$ consistent boundary conditions does not result in a suitable method owing to the ill-posedness of the higher-index problem, e.g., \cite[Example 1.1]{HMT}. 
As a matter of course, the choice $M>N$ goes along with an overdetermined system (\ref{collDAE}),(\ref{collBC}) comprising more equations than unknowns. 

Here we always set $M>N$ and  treat the overdetermined collocation system in a least-squares sense. More precisely, 
let $R_{\pi,M}:\mathcal C_{\pi}([a,b],\R^{m})\rightarrow \mathcal C_{\pi}([a,b],\R^{m})$ denote the restriction operator which assigns to $w\in \mathcal C_{\pi}([a,b],\R^{m})$  the piecewise polynomial $R_{\pi,M}w\in \mathcal C_{\pi}([a,b],\R^{m})$ of degree less than $M$ such that the interpolation conditions,
\begin{align*}
 (R_{\pi,M}w)(t_{j i})=w(t_{j i}),\quad i=1,\cdots,M,\; j=1,\cdots, n,
\end{align*}
are satisfied.
We also assign to $w\in \mathcal C_{\pi}([a,b],\R^{m})$ the vector $W\in \R^{mMn}$,
\begin{align*}
 W=\begin{bmatrix}
    W_{1}\\\vdots\\W_{n}
   \end{bmatrix}\in \R^{mMn},\quad W_{j}=\left(\frac{h_{j}}{M}\right)^{1/2}
   \begin{bmatrix}
    w(t_{j 1})\\\vdots\\w(t_{j M})
   \end{bmatrix}\in \R^{mM},
\end{align*}
which yields (cf.~\cite[Subsection 3]{HMTWW})
\begin{align*}
 \rVert R_{\pi,M}w\lVert_{L^{2}}^{2}= W^{T}\mathcal L W, \quad w\in \mathcal C_{\pi}([a,b],\R^{m}),
\end{align*}
with a positive definite, symmetric matrix $\mathcal L$. The entries of $\mathcal L$ do not at all depend on the partition $\pi$. They are fully determined by the corresponding $M$ Lagrange basis polynomials.
\medskip

Letting $w_{f}(t)=f((Dx)'(t),x(t),t), \; t\in [a,b]$, we introduce the functional 
\begin{align}
\psi_{\pi,M}(x)&=\;W^{T}_{f}\mathcal L W_{f}+\rvert g(x(a),x(b))\lvert^{2} \label{lsc}\\
&=\; \rVert R_{\pi,M}w_{f}\lVert^{2}+\rvert g(x(a),x(b))\lvert^{2},\quad x\in X_{\pi}\label{lscRF}.
\end{align}
The overdetermined least-squares collocation means now that we seek an element $\tilde x_{\pi}$ making the value $\psi_{\pi,M}(\tilde x_{\pi})$ as small as possible.
Note that there are positive constants $c_{\mathcal L}$, $C_{\mathcal L}$ such that
\begin{align*}
 c_{\mathcal L} \rvert W\lvert^{2}=c_{\mathcal L} W^{T}W\leq W^{T}\mathcal L W\leq C_{\mathcal L}W^{T}W=C_{\mathcal L} \rvert W\lvert^{2},\quad W\in \R^{mMn},
\end{align*}
which justifies the labeling \textit{least squares collocation}.
We refer to \cite{HMTWW,HMT} for a number of promising numerical experiments, see also Section \ref{s.E}.
Expression (\ref{lsc}) serves to indicate the basic numerical procedure, whereas formula (\ref{lscRF}) suggests that the mathematics behind is closely related to special properties of the restriction operator $R_{\pi,M}$ on the one hand, but on the other hand, to the  problem to minimize the functional
\begin{align}
 \psi(x)=\;\rVert w_{f}\lVert^{2}+\rvert g(x(a),x(b))\lvert^{2}\quad \text{subject to } x\in X_{\pi},\label{lscF}
\end{align}
for which (\ref{lscRF}) serves as approximation.
We refer to \cite{HM} for properties of the restriction operator in this context. The objective of the present paper is to contribute to the background problem (\ref{lscF}).

\section{Hilbert space setting}\label{s.Hilbert_setting}
Following the ideas of \cite{HMTWW,HMT} concerning linear problems, we investigate also the nonlinear problem (\ref{DAE}),(\ref{BC}) described in Section \ref{s.Introduction} as operator equation $\mathcal Fx=0$ in a Hilbert space setting, which is most comfortable for treating ill-posed problems. Besides standard function spaces such as $L^{2}$, $H^{1}$, $\mathcal C$, etc., equipped with usual inner products and norms, we use the space
\begin{align*}
 H^{1}_{D}=H^{1}_{D}((a,b),\R^{m})=\{x\in L^{2}((a,b),\R^{m}):Dx\in H^{1}((a,b),\R^{k})\,\},
\end{align*}
equipped with the inner product
\begin{align*}
 (x,\bar x)_{H^{1}_{D}}=(x,\bar x)_{L^{2}}+((Dx)',(D\bar x)')_{L^{2}},\quad x,\bar x\in H^{1}_{D}.
\end{align*}
$H^{1}_{D}$ is a Hilbert space, \cite[Lemma 6.9]{Mae2014}. Owing to the continuous embedding $H^{1}((a,b),\R^{k})\hookrightarrow \mathcal C([a,b],\R^{k})$, e.g., \cite[Theorem 0.4]{AP1993}, $x\in H^{1}_{D}$ implies $Dx\in C([a,b],\R^{k})$, and it holds
\begin{align}\label{embedding}
 \rVert Dx\lVert_{\infty}\leq \kappa \rVert Dx\lVert_{H^{1}}\leq \kappa \rVert x\lVert_{H^{1}_{D}},\quad x\in H^{1}_{D}.
\end{align}

We introduce the nonlinear operators $F,F_{BC},$ and $\mathcal F$, 
\begin{align*}
 F&:\dom F\subseteq H^{1}_{D}\rightarrow L^{2},\quad
 F_{BC}:\dom F\subseteq H^{1}_{D}\rightarrow \R^{l},\quad
 \mathcal F:=(F,F_{BC}):\dom F\subseteq H^{1}_{D}\rightarrow L^{2}\times\R^{l},
\end{align*}
\begin{align}
(Fx)(t)&:=f((Dx)'(t),x(t),t),\;t\in(a,b),\quad x\in \dom F,\label{nDAO}\\
F_{BC}\,x&:=g(x(a),x(b)), \quad x\in \dom F,\label{nBC}\\
\mathcal Fx&:=(Fx,F_{BC}x). \quad x\in \dom F,\label{nComp}
\end{align}
as well as the linear operators 
$T,T_{BC},$ and $\mathcal T$, 
\begin{align*}
 T&: H^{1}_{D}\rightarrow L_{2},\quad
 T_{BC}: H^{1}_{D}\rightarrow \R^{l},\quad
 \mathcal T:=(T,T_{BC}): H^{1}_{D}\rightarrow L^{2}\times\R^{l},
\end{align*}
\begin{align}
(Tx)(t)&:=A_{*}(Dx)'+B_{*}x,\;t\in(a,b),\quad x\in H^{1}_{D},\label{linearizedDAO}\\
T_{BC}\,x&:=G_{*\,a}x(a)+G_{*\,b}x(b), \quad x\in H^{1}_{D},\label{linearizedBC}\\
\mathcal Tx&:=(Tx,T_{BC}x). \quad x\in H^{1}_{D}.\label{linearizedComp}
\end{align}
We are merely interested in the local behavior of $F$ and $\mathcal F$ and suppose 
\[\dom \mathcal F=\dom F= \mathfrak B(x_{*}, \rho)\subset H^{1}_{D}.
\]

Regarding condition ($\ref{g}$) as well as (\ref{embedding}), we find the operators $F_{BC}$ and $T_{BC}$  well defined. $F_{BC}$ is Fr{\'e}chet-differentiable, which can be checked by straightforward computation. In particular, $F_{BC}(x_{*})=T_{BC}$. Moreover, supposing the partial derivatives $g_{u}, g_{v}$ to be Lipschitz
continuous, there is a constant $L_{BC}$ such that
\begin{align*}
 \rVert F'_{BC}(x)-F'_{BC}(\bar x)\lVert_{H^{1}_{D}\rightarrow \R^{l}}&\leq L_{BC}\rVert x-\bar x\lVert_{H^{1}_{D}}, \quad x,\bar x\in \dom F,\\
 \rVert F'_{BC}(x)\lVert_{H^{1}_{D}\rightarrow \R^{l}}&\leq L_{BC}\;\rho+ \rVert T_{BC}\lVert_{H^{1}_{D}\rightarrow \R^{l}}, \quad x\in \dom F.
\end{align*}
The linear operators $T$ and $\mathcal T$ are obviously bounded.
The operator $F$ is closely related to a certain Nemyckij operator as Proposition \ref{p.Nemyckij} below  indicates.
In the convergence proofs we will need that $F$ and thus $\mathcal F$ are G\^{a}teaux-differentiable on their domain with uniformly bounded G\^{a}teaux-derivatives,
\begin{align}\label{Mderivative}
\rVert  F'(x)\lVert_{H^{1}_{D}\rightarrow L^{2}}\; &\leq C_{F},\quad x\in \dom F,\nonumber\\
 \rVert \mathcal F'(x)\lVert_{H^{1}_{D}\rightarrow L^{2}\times\R^{l}}\;\leq \rVert  F'(x)\lVert_{H^{1}_{D}\rightarrow L^{2}}\; + \rVert F'_{BC}(x)\lVert_{H^{1}_{D}\rightarrow \R^{l}}&\leq C_{\mathcal F},\quad x\in \dom F,\\
 C_{\mathcal F}&:=C_{F}+L_{BC}\,\rho+\rVert T_{BC}\lVert_{H^{1}_{D}\rightarrow \R^{l}}.\nonumber 
\end{align}
Proposition \ref{p.Nemyckij} provides sufficient conditions to justify these assumptions.
\medskip

Moreover, we will need the inequality
\begin{align}\label{X-contF}
 \rVert F'(x)-F'(\bar x)\lVert_{H^{1}_{D}\rightarrow L^{2}}\;\leq L_{F} h_{\pi}^{-1/2} \,\rVert x-\bar x\lVert_{H^{1}_{\pi}}, \quad x,\bar x\in \dom F\cap X_{\pi},\; \pi\in .\mathcal M_{[r]},
\end{align}
to be valid with a  constant $L_{F}$ for the G\^ateaux-derivative $F'$ where $X_\pi$ is given by \eqref{ansatz}. 
Proposition \ref{p.Nemyckij} provides conditions also for this property to hold. Having (\ref{X-contF}), we are provided with a constant $L$  such that
\begin{align}\label{X-cont}
 \rVert \mathcal F'(x)-\mathcal F'(\bar x)\lVert_{H^{1}_{D}\rightarrow L^{2}\times\R^{l}}\;&\leq (L_{F}h_{\pi}^{-1/2}+L_{BC}) \,\rVert x-\bar x\lVert_{H^{1}_{D}}\nonumber\\
 &\leq L h_{\pi}^{-1/2}\rVert x-\bar x\lVert_{H^{1}_{D}}, \quad x,\bar x\in \dom F\cap X_{\pi},\;\pi\in\mathcal M_{[r]}.
\end{align}
Note that $L_{F}$ and $L$ depend on the stepsize ratio $r$.
\bigskip

Now the BVP (\ref{DAE}),(\ref{BC}) is represented by the operator equation $\mathcal Fx=0$ and the least-squares functional (\ref{lscF}) we are mainly interested in reads now 
\begin{align}\label{lsfunctional}
 \psi(x)=\;\rVert\mathcal Fx\lVert^{2},\quad x\in \dom F.
\end{align}
By construction, one has $T=F'(x_{*})$ and $\mathcal T=\mathcal F'(x_{*})$. The equation $\mathcal F'(x_{*})z=0$ represents the homogeneous variational BVP (\ref{homlinDAE}), with $d=0$, which has the trivial solution only. Therefore, the operator $\mathcal F'(x_{*})$ is injective. At this place we emphasize again, that higher-index DAEs lead to ill-posed problems. In the context here this means that $\im F'(x_{*})$ and $\im \mathcal F'(x_{*})$ are nonclosed subsets in $L^{2}$ and $L^{2}\times\R^{l}$, respectively, see \cite[Theorem 2.4]{HMT}, and the inverse $\mathcal F'(x_{*})^{-1}$ is unbounded.
\begin{proposition}\label{p.Nemyckij}
 Let $f$ and $D$ be as described in Section \ref{s.Technicalities}, with $\mathcal D_{x}=\R^{m}$, $\mathcal D_{y}=\R^{k}$ and bounded partial derivatives $f_{y}$ and $f_{x}$.
 \begin{enumerate}[(i)]
  \item Then,  $x\in H^{1}_{D}$ implies $Fx\in L^{2}$, and $F$ is G\^ateaux-differentiable, with the G\^ateaux-derivative $F'(x)$,
\begin{align}
 & F'(x)z=A_{(x)}(Dz)'+B_{(x)}z, \quad z\in H^{1}_{D},\label{GatF}\\
  &A_{(x)}(t):=f_{y}((Dx)'(t),x(t),t),\quad B_{(x)}(t):=f_{x}((Dx)'(t),x(t),t),\quad \text{\rm a.e.}\; t\in (a,b).
 \end{align}
Moreover, $F'(x)$ is uniformly bounded.
  \item If, additionally, the partial derivatives $f_x$ and $f_y$ satisfy the inequalities
\begin{align*}
\lvert f_{y}(y_{1},x_{1},t)-f_{y}(y_{2},x_{2},)\rvert^{2} & \leq \tilde L^2(|y_{1}-y_{2}\rvert^{2}+\lvert x_{1}-x_{2}\rvert^{2}),\\
\lvert f_{x}(y_{1},x_{1},t)-f_{x}(y_{2},x_{2},)\rvert^{2} & \leq \tilde L^2(|y_{1}-y_{2}\rvert^{2}+\lvert x_{1}-x_{2}\rvert^{2}),
\end{align*}
for all $x_1,x_2\in\R^m$ and $y_1,y_2\in\R^k$,
then there is a constant $L_F=L_F(r)$ such that
 \begin{align*}
 \rVert F'(x)-F'(\bar x)\lVert_{H^{1}_{D}\rightarrow L^{2}}\;\leq L_{F}h_\pi^{-1/2} \,\rVert x-\bar x\lVert_{H^{1}_{D}}, \quad x,\bar x\in  \dom F \cap X_{\pi},\; \pi\in \mathcal M_{[r]},
\end{align*} 
that is \eqref{X-contF}.
 \end{enumerate}
\end{proposition}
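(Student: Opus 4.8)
The plan is to treat the two items separately but with a shared bookkeeping: in both cases the operator $F$ is essentially the composition of the ``velocity-plus-state'' map $x\mapsto((Dx)',x)$ with the Nemyckij (superposition) operator generated by $f$, and the corresponding statement for $F'(x)$ is the superposition operator generated by the matrix-valued functions $f_y$ and $f_x$. For part (i), I would first observe that for $x\in H^1_D$ the component $(Dx)'$ lies in $L^2((a,b),\R^k)$ while $x$ itself lies in $L^2((a,b),\R^m)$, and, crucially, by the embedding \eqref{embedding} the component $Dx$ is in $\mathcal C([a,b],\R^k)$, hence bounded. Since $f$ is continuous and $f_y,f_x$ are bounded, $f$ grows at most affinely in $(y,x)$ uniformly in $t$; composing with $((Dx)',x)\in L^2\times L^2$ then gives $Fx\in L^2$. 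For G\^ateaux-differentiability I would fix $x\in\dom F$ and a direction $z\in H^1_D$, write the difference quotient $\tfrac1s\big(f((Dx)'+s(Dz)',x+sz,t)-f((Dx)'(t),x(t),t)\big)$, and use the integral mean-value identity $f(y+sy',x+sx',t)-f(y,x,t)=\int_0^1\big(f_y(y+\sigma s y',\ldots)sy'+f_x(\ldots)sx'\big)d\sigma$. Dividing by $s$ and letting $s\to0$, the pointwise limit is $A_{(x)}(t)(Dz)'(t)+B_{(x)}(t)z(t)$; convergence in $L^2$ follows from the dominated convergence theorem, the dominating function being $\mathrm{const}\cdot(|(Dz)'(t)|+|z(t)|)\in L^2$ by the boundedness of $f_y,f_x$. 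Uniform boundedness of $F'(x)$ is then immediate: $\rVert F'(x)z\lVert_{L^2}\le (\sup|f_y|)\rVert (Dz)'\lVert_{L^2}+(\sup|f_x|)\rVert z\lVert_{L^2}\le C_F\rVert z\lVert_{H^1_D}$ with $C_F$ independent of $x$.

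For part (ii) the target is a Lipschitz-type estimate for $F'(x)-F'(\bar x)$ restricted to the ansatz space $X_\pi$, in the $H^1_D\to L^2$ operator norm, with the characteristic loss factor $h_\pi^{-1/2}$. Given $x,\bar x\in\dom F\cap X_\pi$ and $z\in H^1_D$, one has
\[
(F'(x)-F'(\bar x))z=\big(A_{(x)}-A_{(\bar x)}\big)(Dz)'+\big(B_{(x)}-B_{(\bar x)}\big)z,
\]
so using the hypothesized Lipschitz bounds on $f_y,f_x$ pointwise,
\[
|A_{(x)}(t)-A_{(\bar x)}(t)|\le \tilde L\big(|(Dx)'(t)-(D\bar x)'(t)|^2+|x(t)-\bar x(t)|^2\big)^{1/2},
\]
and the same for $B$. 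The key difficulty, and the place where the $h_\pi^{-1/2}$ enters, is that $A_{(x)}-A_{(\bar x)}$ need only be measurable and is multiplied against $(Dz)'\in L^2$; to bound $\rVert (A_{(x)}-A_{(\bar x)})(Dz)'\lVert_{L^2}$ I must pull the coefficient out in an $L^\infty$ norm, i.e.\ estimate $\rVert A_{(x)}-A_{(\bar x)}\lVert_{L^\infty}$. This is where the restriction to $X_\pi$ and the bounded stepsize ratio $r$ are essential: on each subinterval $[t_{j-1},t_j)$ the functions $(Dx)'$, $x$, $(D\bar x)'$, $\bar x$ are polynomials of degree $\le N-1$ or $\le N$, and for polynomials of fixed degree on an interval of length $h_j$ an inverse inequality gives $\rVert p\lVert_{L^\infty([t_{j-1},t_j))}\le c_N h_j^{-1/2}\rVert p\lVert_{L^2([t_{j-1},t_j))}$. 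Applying this to $p=(Dx)'-(D\bar x)'$ and $p=x-\bar x$ on each subinterval and taking the maximum over $j$, together with $h_j\ge h_\pi/r$ for $\pi\in\mathcal M_{[r]}$, yields
\[
\rVert A_{(x)}-A_{(\bar x)}\lVert_{L^\infty}\le \tilde L c_N (h_\pi/r)^{-1/2}\Big(\rVert (Dx)'-(D\bar x)'\lVert_{L^2}^2+\rVert x-\bar x\lVert_{L^2}^2\Big)^{1/2}\le \tilde L c_N r^{1/2} h_\pi^{-1/2}\rVert x-\bar x\lVert_{H^1_D}.
\]
Combining, $\rVert (F'(x)-F'(\bar x))z\lVert_{L^2}\le \big(\rVert A_{(x)}-A_{(\bar x)}\lVert_{L^\infty}+\rVert B_{(x)}-B_{(\bar x)}\lVert_{L^\infty}\big)\rVert z\lVert_{H^1_D}$, which gives \eqref{X-contF} with $L_F=L_F(r)=2\tilde L c_N r^{1/2}$ (absorbing $c_N$ into the constant), and the $r$-dependence is exactly the $r^{1/2}$ from $h_j\ge h_\pi/r$.

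The main obstacle is conceptual rather than computational: one cannot hope for an honest Lipschitz estimate of $F'$ in the $H^1_D\to L^2$ norm on all of $\dom F$, because the coefficient perturbation multiplies a derivative that is only $L^2$, and an $L^2\times L^2$ Lipschitz bound on the coefficients is too weak to control an $L^2$ product without an $L^\infty$ handle. The resolution — restricting to $X_\pi$ and paying the inverse-inequality price $h_\pi^{-1/2}$ — is precisely why the statement is phrased the way it is, and the bounded-mesh-ratio hypothesis $\pi\in\mathcal M_{[r]}$ is what makes the per-subinterval inverse inequalities assemble into a global bound with a clean $h_\pi^{-1/2}$. Everything else — the difference-quotient computation, the dominated-convergence argument, the affine-growth bound — is routine once the embedding \eqref{embedding} has been invoked to make $Dx$ uniformly bounded; I would also double-check the minor point that the hypothesis $\mathcal D_x=\R^m$, $\mathcal D_y=\R^k$ is used to guarantee the Nemyckij operator is globally defined so that $\dom F$ really can be taken as a full ball $\mathfrak B(x_*,\rho)$.
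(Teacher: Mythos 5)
Your proposal is correct and follows essentially the same route as the paper: part (i) via the Nemyckij (superposition) operator composed with $x\mapsto((Dx)',x)$ and the boundedness of $f_y,f_x$ (you spell out the dominated-convergence argument where the paper cites Ambrosetti--Prodi), and part (ii) via the inverse inequality $\lVert\cdot\rVert_{L^\infty}\leq c\,h_\pi^{-1/2}\lVert\cdot\rVert_{L^2}$ on $X_\pi$ with the bounded mesh ratio $r$ controlling the constant (the paper cites Ciarlet for this, you derive it per subinterval). No gaps.
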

\begin{proof}
(i) Consider the operators $J:H^1_D\rightarrow L^2((a,b),\R^k)\times L^2$ given by $Jx=((Dx)',x)$ and 
$\tilde F:L^2((a,b),\R^k)\times L^2\rightarrow L^2$ defined as the Nemytskij operator
\[
 F(y,x)(t)=f(y(t),x(t),t).
\]
Under the stated conditions on $f$, $\tilde F$ is well-defined \cite[Theorem 1.2.2]{AP1993}. Moreover, it is
G\^ateaux-differentiable and its G\^ateaux-differential is given by \cite[Theorem 1.2.7]{AP1993}
\[
 \tilde F'(y,x)(u,v) = f_y(y,x,\cdot)u+f_x(y,x,\cdot)v.
\]
Now, $F=\tilde F\circ J$. Hence,
\begin{align*}
 \lim_{h\rightarrow 0} \frac{1}{h}\bigl(F(x+tz)-F(x)\bigr)
   &= \lim_{h\rightarrow 0} \frac{1}{h}\bigl(\tilde F(J(x+tz))-\tilde F(J(x))\bigr) \\
   &= \lim_{h\rightarrow 0} \frac{1}{h}\bigl(\tilde F(J(x)+tJ(z))-\tilde F(J(x))\bigr) \\
   &= \lim_{h\rightarrow 0} \frac{1}{h}\bigl(\tilde F(u+tv)-\tilde F(u)\bigr) \\
   &= \tilde F'(u)v \\
   &= f_y((Dx)',x,\cdot)(Dz)'+f_x((Dx)',x,\cdot)z
\end{align*}
where we used $u=J(x)$ and $v=J(z)$.

The norm of the derivative can be estimated by
\begin{align*}
 \lVert A_{(x)}(Dz)'+B_{(x)}z \rVert^2 
& \leq \lVert A_{(x)} \rVert_{L^\infty((a,b),\R^{m\times k})}^2 \lVert (Dz)' \rVert^2 +
       \lVert B_{(x)} \rVert_{L^\infty((a,b),\R^{m\times m})}^2 \lVert z \rVert^2 \\
& \leq C^2 \lVert z \rVert_{H^1_D}^2,
\end{align*}
where $C$ denotes a bound on the partial derivatives $f_y$ and $f_x$. Hence, the G\^ateaux-derivative
is uniformly bounded.

(ii) We will need an inverse inequality for functions from $X_\pi$. A consequence of
\cite[Theorem 3.2.6]{Ciarlet02} is the estimate
\begin{equation}\label{invineq}
 \lVert x \rVert_{L^\infty((a,b),\R^m)} \leq c h_\pi^{-1/2} \lVert x \rVert,\quad x\in X_\pi
\end{equation}
for a constant $c$ independent of $\pi\in\mathcal M_{[r]}$.

Let $\pi\in\mathcal M_{[r]}$ and $x,\bar x\in X_\pi$. Then ist holds
\begin{align*}
\lVert(F'(x)-F'(\bar{x}))z\rVert^{2} & =\int_{a}^{b}\lvert f_{y}((Dx)'(t),x(t),t)-f_{y}((D\bar{x})'(t),\bar{x}(t),t)\rvert{}^{2}\lvert(Dz)'(t)\rvert^{2}dt\\
 &\quad +\int_{a}^{b}\lvert f_{x}((Dx)'(t),x(t),t)-f_{x}((D\bar{x})'(t),\bar{x}(t),t)\rvert^{2}\lvert z(t)\rvert^{2}dt\\
 & \leq \tilde L^2\int_{a}^{b}\left(\lvert(Dx)'(t)-(D\bar{x})'(t)\rvert^{2}+\lvert x(t)-\bar{x}(t)\rvert^{2}\right)\lvert(Dz)'(t)\rvert^{2}dt\\
 &\quad +\tilde L^2\int_{a}^{b}\left(\lvert(Dx)'(t)-(D\bar{x})'(t)\rvert^{2}+\lvert x(t)-\bar{x}(t)\rvert^{2}\right)\lvert z(t)\rvert^{2}dt\\
 & \leq \tilde L^2\max_{a\leq t\leq b}\left(\lvert(Dx)'(t)-(D\bar{x})'(t)\rvert^{2}+\lvert x(t)-\bar{x}(t)\rvert^{2}\right)\int_{a}^{b}\left(\lvert(Dz)'(t)\rvert^{2}+\lvert z(t)\rvert^{2}\right)dt\\
 & \leq \tilde L^2\left(\lVert(Dx)'-(D\bar{x})'\rVert_{\infty}^{2}+\lVert x-\bar{x}\rVert_{\infty}^{2}\right)\lVert z\rVert_{H_{D}^{1}}^2.
\end{align*}
Applying \eqref{invineq}, we arrive at
\[
\lVert(F'(x)-F'(\bar{x}))z\rVert^{2}\leq \tilde L^2 c^{2}h^{-1}_\pi\lVert x-\bar{x}\rVert_{H_{D}^{1}}^{2}\lVert z\rVert_{H_{D}^{1}}^{2}
\]
which proves the assertion.
\end{proof}

\begin{remark}
According to Propsition~\ref{p.Nemyckij}, the G\^ateaux-derivative $F'$ is contiuous on each $X_\pi$. Hence, it is Fr\'echet-differentiable there. However, $F$ is in general not Fr\'echet-differentiable on $H^1_D$ unless it has a very special structure. A discussion of related question can be found in \cite[Section 1.2]{AP1993}.
\end{remark}

\begin{corollary}\label{xstar}
 Let the partial derivatives $f_{y}$ and $f_{x}$ satisfy the Lipschitz condition in Proposition \ref{p.Nemyckij}(ii) locally. Let $\tilde x\in \dom F$ be a sufficiently smooth function, possibly not belonging to $X_{\pi}$, $\lVert \tilde x-x_{*}\rVert\leq \rho/2$. Then it holds, for all $\tau\in[0,1]$,
\begin{equation}\label{X-gen}
 \lVert F'(x)-F'(x+(1-\tau)(\tilde x-x))\rVert \leq L_F h_\pi^{-1/2}(1-\tau)\lVert\tilde x-x\rVert_{H_D^1}+
\hat Lh_\pi^{N-1/2},
\; x\in X_\pi\cap\dom F,\;\pi\in \mathcal M_{[r]},
\end{equation}
with a constants $\hat L$. In particular, for $\tau=0$, we obtain
\[
 \lVert F'(x)-F'(\tilde x)\rVert \leq L_F h_\pi^{-1/2}\lVert\tilde x-x\rVert_{H_D^1}+
\hat Lh_\pi^{N-1/2},
\quad x\in X_\pi\cap\dom F, \;\pi\in \mathcal M_{[r]}
\]
and  
\begin{align*}
  \lVert \mathcal F'(x)-\mathcal F'(\tilde x)\rVert \leq L h_\pi^{-1/2}\lVert\tilde x-x\rVert_{H_D^1}+
\hat Lh_\pi^{N-1/2},
\quad x\in X_\pi\cap\dom F, \;\pi\in \mathcal M_{[r]}.
\end{align*}  
\end{corollary}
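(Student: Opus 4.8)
The plan is to reduce the statement to Proposition~\ref{p.Nemyckij}(ii) plus an approximation argument controlling the error introduced by replacing $\tilde x$ with a piecewise polynomial interpolant. The key observation is that $\tilde x$ need not lie in $X_\pi$, so Proposition~\ref{p.Nemyckij}(ii) cannot be applied directly to the pair $(x,\tilde x)$ or $(x, x+(1-\tau)(\tilde x - x))$; however, since $\tilde x$ is sufficiently smooth, we may introduce a suitable piecewise polynomial approximation $\tilde x_\pi\in X_\pi$ (say, the projection or interpolant used for the ansatz space, matching the polynomial degrees $N$ in the $Dx$-components and $N-1$ in the remaining components). Standard interpolation/approximation theory for piecewise polynomials then gives a bound of the form $\lVert \tilde x - \tilde x_\pi\rVert_{H^1_D}\leq \hat c\, h_\pi^{N}\,\lVert \tilde x\rVert_{H^{N+1}}$ (the orders being $h_\pi^{N}$ for the $H^1_D$-seminorm on the differentiated component and at least that for the rest, using the smoothness of $\tilde x$).

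First I would fix $\pi\in\mathcal M_{[r]}$, $x\in X_\pi\cap\dom F$, and $\tau\in[0,1]$, and set $w_\tau := x+(1-\tau)(\tilde x-x)$ and $w_{\tau,\pi} := x+(1-\tau)(\tilde x_\pi-x)\in X_\pi$. Then I would split
\begin{align*}
 \lVert F'(x)-F'(w_\tau)\rVert \leq \lVert F'(x)-F'(w_{\tau,\pi})\rVert + \lVert F'(w_{\tau,\pi})-F'(w_\tau)\rVert.
\end{align*}
The first term is handled by Proposition~\ref{p.Nemyckij}(ii): since both $x$ and $w_{\tau,\pi}$ belong to $X_\pi\cap\dom F$, we get $\lVert F'(x)-F'(w_{\tau,\pi})\rVert \leq L_F h_\pi^{-1/2}\lVert x - w_{\tau,\pi}\rVert_{H^1_D} = L_F h_\pi^{-1/2}(1-\tau)\lVert \tilde x_\pi - x\rVert_{H^1_D}$, and then $\lVert \tilde x_\pi - x\rVert_{H^1_D}\leq \lVert \tilde x - x\rVert_{H^1_D} + \lVert \tilde x - \tilde x_\pi\rVert_{H^1_D}$, so the $\lVert \tilde x - x\rVert_{H^1_D}$ contribution produces exactly the first term on the right-hand side of \eqref{X-gen}, while the $\lVert \tilde x-\tilde x_\pi\rVert_{H^1_D}\lesssim h_\pi^N$ contribution, multiplied by $L_F h_\pi^{-1/2}(1-\tau)\leq L_F h_\pi^{-1/2}$, yields a term of order $h_\pi^{N-1/2}$, which can be absorbed into $\hat L h_\pi^{N-1/2}$.

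For the second term, the two arguments $w_{\tau,\pi}$ and $w_\tau$ differ by $(1-\tau)(\tilde x_\pi-\tilde x)$; here I cannot invoke Proposition~\ref{p.Nemyckij}(ii) because $w_\tau\notin X_\pi$ in general, but I can use the direct estimate from the proof of that proposition \emph{before} the inverse inequality \eqref{invineq} is applied, namely
\begin{align*}
 \lVert (F'(w_{\tau,\pi})-F'(w_\tau))z\rVert^2 \leq \tilde L^2\bigl(\lVert (Dw_{\tau,\pi})'-(Dw_\tau)'\rVert_\infty^2 + \lVert w_{\tau,\pi}-w_\tau\rVert_\infty^2\bigr)\lVert z\rVert_{H^1_D}^2,
\end{align*}
which holds for arbitrary (not necessarily piecewise polynomial) arguments as long as the derivatives involved are in $L^\infty$. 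Then I would bound $\lVert (Dw_{\tau,\pi})'-(Dw_\tau)'\rVert_\infty + \lVert w_{\tau,\pi}-w_\tau\rVert_\infty \leq (1-\tau)\lVert \tilde x_\pi - \tilde x\rVert_{\mathcal C^1_D}$ by the smooth-function interpolation estimate in the sup-norm, $\lVert \tilde x - \tilde x_\pi\rVert_\infty + \lVert (D\tilde x)' - (D\tilde x_\pi)'\rVert_\infty\lesssim h_\pi^{N}$ (again using that $\tilde x$ is smooth and the degrees $N$, $N-1$ are chosen so the $Dx$-part is interpolated to order $N+1$ and its derivative to order $N$). This gives the second term a bound of order $h_\pi^{N}\leq h_\pi^{N-1/2}$ (for $h_\pi$ bounded), again absorbed into $\hat L h_\pi^{N-1/2}$. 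Choosing $\hat L$ large enough to dominate both absorbed contributions, and noting the constants depend only on $r$, $\tilde L$, $\rho$, and $\lVert\tilde x\rVert_{H^{N+1}}$, establishes \eqref{X-gen}. The specializations $\tau=0$ and the $\mathcal F'$-version follow immediately from the definition of $L$ in \eqref{X-cont} and the Lipschitz bound on $F'_{BC}$ combined with $\lVert \tilde x - x\rVert_{H^1_D}$-continuity of $F'_{BC}$.

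The main obstacle is the bookkeeping around the smooth function $\tilde x$: one must be careful that the approximation $\tilde x_\pi$ is chosen so that \emph{both} the $H^1_D$-distance and the relevant sup-norm distances decay like $h_\pi^{N}$, which requires $\tilde x$ to have enough derivatives ("sufficiently smooth" must mean at least $H^{N+1}$, or $\mathcal C^{N+1}$ for the sup-norm estimates) and requires matching the polynomial degrees in $X_\pi$ correctly — the component split at index $k$ is exactly what makes the $(Dx)'$-part interpolable to the needed order. The remaining estimates are routine, relying only on already-established inequalities and classical piecewise-polynomial approximation bounds.
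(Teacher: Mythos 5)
Your proposal is correct and follows essentially the same route as the paper's proof: your $w_{\tau,\pi}=x+(1-\tau)(\tilde x_\pi-x)$ coincides with the paper's $I_\pi(x+(1-\tau)(\tilde x-x))$ since the interpolant is linear and the identity on $X_\pi$, so your splitting, the application of Proposition~\ref{p.Nemyckij}(ii) to the two elements of $X_\pi$, and the sup-norm estimate (taken from the proof of that proposition before the inverse inequality) applied to the interpolation remainder are exactly the paper's argument. The only cosmetic difference is that you make explicit the smoothness requirement on $\tilde x$ and the handling of the $\mathcal F'$-version, which the paper leaves implicit.
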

\begin{proof}
Let $I_\pi:H_D^1\cap C_{\pi}([a,b],\R^m)\rightarrow X_\pi$ be a piecewise polynomial interpolation operator. In order to be specific,
consider node sequences
\begin{gather*}
0=\sigma_0^d < \sigma_1^d < \cdots < \sigma_N^d = 1, \\
0<\sigma_1^a < \sigma_2^a < \cdots < \sigma_N^a < 1,
\end{gather*}
and define $I_{\pi}$ componentwise.
For a component $x_\kappa\in C[a,b]$, $1\leq \kappa\leq k$, $I_{\pi,\kappa}x_\kappa$ is the piecewise 
polynomial interpolation using the nodes $\bar t_{ji} = t_{j-1}+\sigma_i^dh_j$, $i=0,\ldots,N$, $j=1,\ldots,n$.
Analogously, for $x_\kappa\in C_\pi[a,b]$, $I_{\pi,\kappa}$, $k<\kappa\leq m$ is the piecewise polynomial iterpolation using the nodes $\bar t_{ji} = t_{j-1}+\sigma_i^ah_j$, $i=1,\ldots,N$, $j=1,\ldots,n$. Then we set
$I_\pi = [I_{\pi,1},\ldots,I_{\pi,m}]^T$.

Let $R_\pi=\tilde x-I_\pi\tilde x$ be the remainder. Standard interpolation results provide the estimate
\[
 \lVert (DR_\pi)' \rVert_\infty\leq C h_\pi^N,\quad \lVert R_\pi \rVert_\infty\leq C h_\pi^N, \quad \lVert R_{\pi}\rVert_{H_{D}^{1}}\leq (2(b-a))^{1/2} C\; h_{\pi}^{N}.
\]
For all  sufficiently fine partitions $\pi\in \mathcal M_{[r]}$, $I_{\pi}\tilde x$ belongs also to $\dom F$.

Since $I_\pi$ is the identity on $X_\pi$, we have, for each $x\in X_{\pi}\cap \dom F$,
\begin{align*}
x+(1-\tau)(\tilde x-x)-I_\pi(x+(1-\tau)(\tilde x-x))
 &= x+(1-\tau)(\tilde x-x)-(x+(1-\tau)(I_\pi\tilde x-x)) \\
 &= (1-\tau)R_\pi.
\end{align*}
Following the lines of the proof of Proposition~\ref{p.Nemyckij}(ii) we arrive at the estimate
\begin{align*}
 \lVert(F'(I_\pi(x+(1-\tau)(\tilde x-x)))-F'(x+(1-\tau)(\tilde x-x)))z\rVert^{2} &\leq \tilde L^2 \bigl(\lVert (DR_\pi)' \rVert_\infty^2+
  \lVert R_\pi \rVert_\infty^2\bigr)\lVert z \rVert_{H_{D}^{1}}^2 \\
&\leq 2\tilde L^2 C^2 h_\pi^{2N}\lVert z \rVert_{H_{D}^{1}}^2,
\end{align*}
hence
\[
 \lVert F'(I_\pi(x+(1-\tau)(\tilde x-x)))-F'(x+(1-\tau)(\tilde x-x))\rVert \leq \tilde{\tilde L} h_\pi^N.
\]
Then we obtain
\begin{align*}
\lVert F'(x)-F'(x+(1-\tau)(\tilde x-x))\rVert
  &\leq   \lVert F'(x)-F'(I_\pi(x+(1-\tau)(\tilde x-x)))\rVert \\
  &\quad  + \lVert F'(I_\pi(x+(1-\tau)(\tilde x-x)))-F'(x+(1-\tau)(\tilde x-x))\rVert \\
  &\leq L_F h_\pi^{-1/2}(1-\tau)\lVert I_\pi\tilde x-x\rVert_{H_D^1}+\tilde{\tilde L} h_\pi^N \\
  &\leq L_F h_\pi^{-1/2}(1-\tau)\bigl(\lVert\tilde x-x\rVert_{H_D^1}+Ch_\pi^N\bigr)+\tilde{\tilde L} h_\pi^N.
\end{align*}
This proofs the assertion.
\end{proof}

\section{Properties related to individual sufficiently fine partitions $\pi$}\label{s.single_part}

This section is to provide an approximation of the solution $x_{*}$ by means of an iteration residing in  $X_{\pi}$ for an arbitrary sufficiently fine individual partition $\pi\in \mathcal M_{[r]}$.

The space of ansatz functions $X_{\pi}$ is defined by (\ref{ansatz}) as before.
Below we frequently apply the topological decompositions
\begin{align}\label{decomp}
 H^{1}_{D}= X_{\pi}\oplus X_{\pi}^{\bot},\quad L^{2}= F'(x)X_{\pi}\oplus (F'(x)X_{\pi})^{\bot},\quad L^{2}\times \R^{l}= \mathcal F'(x)X_{\pi}\oplus (\mathcal F'(x)X_{\pi})^{\bot}.
\end{align}
and the associated orthoprojectors 
\begin{align}\label{proj}
 U_{\pi}:H^{1}_{D}\rightarrow H^{1}_{D}, \quad V_{\pi}(x):L^{2}\rightarrow L^{2},\quad \mathcal V_{\pi}(x):L^{2}\times \R^{l}\rightarrow L^{2}\times\R^{l},\\
 \im U_{\pi}= X_{\pi},\quad \im V_{\pi}(x)= F'(x)X_{\pi},\quad \im \mathcal V_{\pi}(x)= \mathcal F'(x)X_{\pi},
\end{align}
in which $x\in \dom F$. $F'(x_{*})$ is a fine DAO with index $\mu$ and  $\mathcal F'(x_{*})$ is injective, but its inverse is unbounded if $\mu>1$.
\begin{lemma}\label{l.basic}
Let $x_{*}$ be sufficiently smooth. Let $N>\mu-1$.
Choose $s\in\R$ with $s>\mu-1/2>0$ and $\rho_{\pi}:=c_{\rho}h^{s}_{\pi}$, with a constant $c_{\rho}>0$.
Then there is a constant $c_{\gamma} >0$, such that the following relations become valid: 
\begin{align}
 \rVert\; (\mathcal F'(x_{*})U_{\pi})^{+} \lVert \leq \Gamma_{\pi}:=\frac{1}{c_{\gamma}}h_{\pi}^{1-\mu},\label{MP1}\\
 \ker \mathcal F'(x)U_{\pi}=\ker U_{\pi}, \quad x\in \bar{\mathfrak B}(x_{*},\rho_{\pi})\cap X_{\pi},\label{MP2}\\
 (\mathcal F'(x)U_{\pi})^{+}=  (\mathcal V_{\pi}(x_{*})\mathcal F'(x)U_{\pi})^{+} \mathcal V_{\pi}(x_{*})\mathcal V_{\pi}(x), \quad x\in \bar{\mathfrak B}(x_{*},\rho_{\pi})\cap X_{\pi},\label{MP3}\\
  \rVert\; (\mathcal F'(x)U_{\pi})^{+} \lVert \leq \rVert\; (\mathcal V_{\pi}(x_{*})\mathcal F'(x)U_{\pi})^{+} \lVert\leq 2\Gamma_{\pi},\quad x\in \bar{\mathfrak B}(x_{*},\rho_{\pi})\cap X_{\pi},\label{MP4}
\end{align}
for each arbitrary mesh $\pi\in \mathcal M_{[r]}$ with sufficiently small $h_{\pi}$.
\end{lemma}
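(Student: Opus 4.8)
The plan is to derive all four claims from known facts about the linear operator $\mathcal{T}=\mathcal{F}'(x_*)$ together with a perturbation argument exploiting the Lipschitz estimate \eqref{X-cont} and the smallness of $\rho_\pi$ relative to the norm bound $\Gamma_\pi$. First I would establish \eqref{MP1}: since $\mathcal{F}'(x_*)$ is a fine DAO of index $\mu$ and is injective with accurately stated boundary condition, its restriction $\mathcal{F}'(x_*)U_\pi = \mathcal{T}U_\pi$ to the finite-dimensional space $X_\pi$ is injective (its kernel is $\ker U_\pi$ by \eqref{G} and the structure of $X_\pi$), so the Moore--Penrose inverse $(\mathcal{T}U_\pi)^+$ is well-defined. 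The bound $\lVert(\mathcal{T}U_\pi)^+\rVert\le \Gamma_\pi = c_\gamma^{-1}h_\pi^{1-\mu}$ is precisely the quantitative stability estimate established for the linear least-squares collocation problem in the cited works \cite{HMT,HMTWW,HM}; I would invoke that result, noting that the exponent $1-\mu$ reflects the loss of $\mu-1$ derivatives and that $N>\mu-1$ is exactly what guarantees the ansatz space is rich enough for the estimate to hold uniformly over $\pi\in\mathcal{M}_{[r]}$.

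Next, for \eqref{MP2}, I would argue that for $x\in\bar{\mathfrak{B}}(x_*,\rho_\pi)\cap X_\pi$ the operator $\mathcal{F}'(x)U_\pi$ differs from $\mathcal{F}'(x_*)U_\pi$ by an operator of norm at most $Lh_\pi^{-1/2}\rho_\pi = Lc_\rho h_\pi^{s-1/2}$ by Corollary~\ref{xstar} (or directly \eqref{X-cont}, since $x$ and $x_*$ need not both lie in $X_\pi$ — this is why Corollary~\ref{xstar} is stated). Because $s>\mu-1/2$, this perturbation is $o(h_\pi^{\mu-1}) = o(\Gamma_\pi^{-1})$ as $h_\pi\to 0$; hence for $h_\pi$ small enough the perturbation times $\Gamma_\pi$ is, say, $\le 1/2<1$. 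Restricting to the complement of $\ker U_\pi$, i.e. to $X_\pi$, where $\mathcal{F}'(x_*)U_\pi$ is bounded below by $\Gamma_\pi^{-1}$, the standard Banach-type perturbation lemma gives that $\mathcal{F}'(x)U_\pi$ remains bounded below on $X_\pi$, so $\ker\mathcal{F}'(x)U_\pi=\ker U_\pi$. Moreover $\mathcal{V}_\pi(x_*)\mathcal{F}'(x)U_\pi$ is then injective on $X_\pi$ as well, since $\mathcal{V}_\pi(x_*)$ acts as the identity on $\mathcal{F}'(x_*)X_\pi$ and perturbs it only by the same small amount; this yields the norm bound $\lVert(\mathcal{V}_\pi(x_*)\mathcal{F}'(x)U_\pi)^+\rVert\le 2\Gamma_\pi$ in \eqref{MP4} via the perturbation lemma with the factor $(1-1/2)^{-1}=2$.

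For \eqref{MP3}, the identity $(\mathcal{F}'(x)U_\pi)^+ = (\mathcal{V}_\pi(x_*)\mathcal{F}'(x)U_\pi)^+\,\mathcal{V}_\pi(x_*)\mathcal{V}_\pi(x)$ is an algebraic fact about generalized inverses: I would verify it by checking that the right-hand side is an outer inverse of $\mathcal{F}'(x)U_\pi$ with the correct range and nullspace. Concretely, $\mathcal{F}'(x)X_\pi = \im\mathcal{V}_\pi(x)$ and, for $h_\pi$ small, $\mathcal{V}_\pi(x_*)$ maps $\mathcal{F}'(x)X_\pi$ isomorphically onto $\mathcal{V}_\pi(x_*)\mathcal{F}'(x)X_\pi$ (again the perturbation being small); composing with $\mathcal{V}_\pi(x)$ first projects any input onto $\mathcal{F}'(x)X_\pi$, and the Moore--Penrose inverse of the composition undoes the map back to $X_\pi$ modulo $\ker U_\pi$. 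Finally the first inequality in \eqref{MP4}, $\lVert(\mathcal{F}'(x)U_\pi)^+\rVert\le\lVert(\mathcal{V}_\pi(x_*)\mathcal{F}'(x)U_\pi)^+\rVert$, follows from \eqref{MP3} together with $\lVert\mathcal{V}_\pi(x_*)\rVert=\lVert\mathcal{V}_\pi(x)\rVert=1$ (orthoprojectors).

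The main obstacle is the interplay of scales in \eqref{MP2}--\eqref{MP4}: the bound $\Gamma_\pi$ blows up like $h_\pi^{1-\mu}$ while the admissible radius $\rho_\pi$ shrinks like $h_\pi^s$, and the whole argument hinges on $\rho_\pi\Gamma_\pi\cdot h_\pi^{-1/2}\to 0$, which is exactly the requirement $s>\mu-1/2$. Care is needed because the perturbation estimate \eqref{X-cont} carries the extra ill-conditioning factor $h_\pi^{-1/2}$ coming from the inverse inequality \eqref{invineq} on $X_\pi$, and because Corollary~\ref{xstar} introduces a further additive term $\hat{L}h_\pi^{N-1/2}$ when $x_*$ (or the reference point) is not in $X_\pi$ — one must check $N>\mu-1$ makes this term negligible against $\Gamma_\pi^{-1}$ as well. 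Getting the constants to line up so that "sufficiently small $h_\pi$" can be chosen uniformly over all $\pi\in\mathcal{M}_{[r]}$ — rather than depending on the individual partition — is the delicate bookkeeping point, and it relies crucially on the uniformity (in $\pi\in\mathcal{M}_{[r]}$) built into \eqref{MP1}, \eqref{invineq}, and the constant $L=L(r)$.
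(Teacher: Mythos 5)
Your proposal is correct and follows essentially the same route as the paper: \eqref{MP1} is taken from the linear instability-threshold result of \cite{HMT}, \eqref{MP2} and \eqref{MP4} come from the Banach-type perturbation argument with the key smallness condition $\Gamma_{\pi}\bigl(Lh_{\pi}^{-1/2}\rho_{\pi}+\hat Lh_{\pi}^{N-1/2}\bigr)\leq 1/2$ (which is exactly where $s>\mu-1/2$ and $N>\mu-1$ enter, via Corollary~\ref{xstar} since $x_{*}\notin X_{\pi}$), and \eqref{MP3} is the algebraic identity for Moore--Penrose inverses with common kernel $\ker U_{\pi}$. The only cosmetic difference is that the paper applies the perturbation lemma of the appendix to the composed operator $\mathcal V_{\pi}(x_{*})\mathcal F'(x)U_{\pi}$ (so that the ranges are nested) and then recovers $\ker\mathcal F'(x)U_{\pi}=\ker U_{\pi}$ from the kernel inclusions, whereas you bound $\mathcal F'(x)U_{\pi}$ from below on $X_{\pi}$ directly; both are the same estimate in substance.
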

\begin{proof}
The existence of $c_{\gamma}>0$ as well as the inequality (\ref{MP1}) are ensured by \cite[Theorem 4.1]{HMT} concerning the instability threshold. $c_{\gamma}$ may depend on the ratio $r$. The injectivity of $\mathcal F'(x_{*})$ immediately implies $\ker \mathcal F'(x_{*})U_{\pi}=\ker U_{\pi}$.

For $x\in \bar{\mathfrak B}(x_{*},\rho_{\pi})\cap X_{\pi}$, $\rho_{\pi}<\rho$, we have
\begin{align}\label{MP5}
 \ker U_{\pi}\subseteq  \mathcal F'(x)U_{\pi}\subseteq  \mathcal V(x_{*})\mathcal F'(x)U_{\pi}
\end{align}
and 
\[
 \mathcal V(x_{*})\mathcal F'(x)U_{\pi}=\underbrace{\mathcal F'(x_{*})U_{\pi}}_{=:\mathfrak A}+\underbrace{\mathcal V(x_{*})(\mathcal F'(x)-\mathcal F'(x_{*}))U_{\pi}}_{=:\mathfrak E}=\mathfrak A+\mathfrak E..
\]
Making the stepsize $h_{\pi}$ small enough and regarding Corollary~\ref{xstar}, \eqref{Mderivative}, and (\ref{MP1}) yields
\begin{equation}
\rVert \mathfrak A^{+}\lVert \,\rVert \mathfrak E\lVert\,\leq \Gamma_{\pi}( L h_{\pi}^{-1/2}\rho_{\pi} + \hat L h_{\pi}^{N-1/2})\leq \frac{1}{c_{\gamma}}\left(c_{\rho} L h_{\pi}^{s-\mu+1/2}+\hat L h_{\pi}^{N-\mu+1/2}\right)
\leq \frac{1}{2}. \label{hsmall}
\end{equation}
Applying Lemma~A.\ref{ST} of the appendix it results that
\[
\dim \ker \mathcal V(x_{*})\mathcal F'(x)U_{\pi}= \dim \ker \mathcal F'(x_{*})U_{\pi},\quad \text{thus} \quad\ker \mathcal V(x_{*})\mathcal F'(x)U_{\pi}=\ker U_{\pi},
\]
and further 
\begin{align*}
 \rVert(\mathcal V(x_{*})\mathcal F'(x)U_{\pi})^{+} \lVert \leq \frac{\rVert \mathfrak A^{+}\lVert }{1-\rVert \mathfrak A^{+}\lVert \;\rVert \mathfrak E\lVert }\leq 2\Gamma_{\pi}.
\end{align*}
Taking into account (\ref{MP5}) we have
\begin{align*}
 \ker U_{\pi}=\ker \mathcal F'(x)U_{\pi}= \ker \mathcal V(x_{*})\mathcal F'(x)U_{\pi},
\end{align*}
and, in particular, (\ref{MP2}). It also follows that
\begin{align*}
 U_{\pi}=(\mathcal F(x)U_{\pi})^{+}\mathcal F(x)U_{\pi}, \quad U_{\pi}=(\mathcal V_{\pi}(x_{*})\mathcal F(x)U_{\pi})^{+}\,\mathcal V_{\pi}(x_{*})\mathcal F(x)U_{\pi}.
\end{align*}
Multiplying the last identity from the right by ($\mathcal F(x)U_{\pi})^{+}$ yields
\begin{align*}
 (\mathcal F(x)U_{\pi})^{+}=(\mathcal V_{\pi}(x_{*})\mathcal F(x)U_{\pi})^{+}\,\mathcal V_{\pi}(x_{*})\mathcal V_{\pi}(x),
\end{align*}
that means (\ref{MP3}), and (\ref{MP4}) follows immediately.
\end{proof}
It should be noted that $s$ in the previous lemma is not restricted to be an integer.

As previously agreed upon, there exists $x_{*}$ such that $\mathcal Fx_{*}=0$, thus $\psi(x_{*})=0$,  $F'(x_{*})$ is a fine DAO, the varionational problem $\mathcal F'(x_{*})z=0$ features accurately stated boundary condition, and the composed operator $\mathcal F'(x_{*})$ is injective.
Assuming the solution $x_{*}$ to be smooth enough we apply the estimates (cf.~\cite{HMT})
\begin{align}\label{alpha}
 \alpha_{\pi}:=\rVert U_{\pi}x_{*}-x_{*}\lVert \leq c_{\alpha} h_{\pi}^{N},
\end{align}
in which $N$ is again the polynomial degree used for the ansatz space $X_{\pi}$. 
\bigskip

Since the inverse $\mathcal F'(x_{*})^{-1}$ is unbounded, standard Newton-like iterations cannot be expected to work well here.
Instead we apply a kind of projected Newton iteration using the bounded Moore-Penrose inverse\footnote{Note that $(\mathcal F'(x)U_{\pi})^{+}$ is a bounded outer inverse of $\mathcal F'(x_{*})$.} $(\mathcal F'(x)U_{\pi})^{+}$ against the background of Lemma \ref{l.basic}. 

More precisely, supposing that $h_{\pi}$ is small enough, we take an initial guess $x_{0}\in \bar{\mathfrak B}(x_{*},\rho_{\pi})\cap X_{\pi}$  and provide the correction $z_{1}$ by means of the least-squares problem
\begin{align}\label{correction}
 z_{1}=\argmin \{\,\rVert \mathcal F'(x_{0})z+\mathcal Fx_{0}\lVert^{2}: z\in X_{\pi}\}
 =-(\mathcal F'(x_{0})U_{\pi})^{+}\,\mathcal Fx_{0},
\end{align}
and then put $x_{1}=x_{0}+z_{1}$, and so on. By construction, $z_{1}$ is well defined and belongs to $X_{\pi}$, and so does the new iteration $x_{1}$. Notice that $z_{1}=U_{\pi}z_{1}$ serves as descent direction of the functional $\psi$ at $x_{0}$,  as long as $\mathcal V_{\pi}(x_{0})\mathcal Fx_{0}\neq 0$, because of
\begin{align*}
 \psi'(x_{0})z_{1}&=2 (\mathcal F'(x_{0})z_{1}, \mathcal Fx_{0})=2 (\mathcal F'(x_{0})U_{\pi}z_{1}, \mathcal Fx_{0})=-2 (\mathcal F'(x_{0})U_{\pi}(\mathcal F'(x_{0})U_{\pi})^{+}\mathcal Fx_{0}, \mathcal Fx_{0})\\
 &=-2 (\mathcal V(x_{0})\mathcal Fx_{0}, \mathcal Fx_{0})=-2 \rVert\mathcal V(x_{0})\mathcal Fx_{0}\lVert^{2}.
\end{align*}
Next we ask if $x_{1}$ belongs to the ball $\bar{\mathfrak B}(x_{*},\rho_{\pi})$ .
For this aim we derive
\begin{align*}
 x_{1}-x_{*}&=x_{0}-x_{*}-(\mathcal F'(x_{0})U_{\pi})^{+}\,(\mathcal Fx_{0}-\mathcal Fx_{*})\\
 &=U_{\pi}(x_{0}-x_{*})-(I-U_{\pi})x_{*}-(\mathcal F'(x_{0})U_{\pi})^{+}\int_{0}^{1}\mathcal F'(\tau x_{0}+(1-\tau)x_{*}){\rm d\tau}\,(x_{0}-x_{*})\\
 &=\mathfrak B-\mathfrak D.
 \end{align*}
Then
 \begin{align*}
 \mathfrak B&=U_{\pi}(x_{0}-x_{*})-(\mathcal F'(x_{0})U_{\pi})^{+}\int_{0}^{1}\mathcal F'(\tau x_{0}+(1-\tau)x_{*}){\rm d\tau}\,U_{\pi}(x_{0}-x_{*})\\
 &=(\mathcal F'(x_{0})U_{\pi})^{+}\mathcal F'(x_{0})U_{\pi}(x_{0}-x_{*})-(\mathcal F'(x_{0})U_{\pi})^{+}\int_{0}^{1}\mathcal F'(\tau x_{0}+(1-\tau)x_{*}){\rm d\tau}\,U_{\pi}(x_{0}-x_{*})\\
 &=(\mathcal F'(x_{0})U_{\pi})^{+}\int_{0}^{1}(\mathcal F'(x_{0})-\mathcal F'(\tau x_{0}+(1-\tau)x_{*})\,){\rm d\tau}\,U_{\pi}(x_{0}-x_{*}),
\end{align*}
hence, applying Corollary \ref{xstar} for $\tilde x=\tau x_{0}+(1-\tau)x_{*}$, $\rVert \tilde x-x_{*}\lVert\leq \rho_{\pi}\leq \frac{1}{2}\rho$, and supposing $N>s$,
\begin{align*}
 \rVert\mathfrak B\lVert&\leq 2\Gamma_\pi\left( \frac{1}{2}L h_{\pi}^{-1/2}\rho_\pi+\hat L h^{N-1/2}\right) \rVert x_{0}-x_{*}\lVert \leq \frac{1}{c_{\gamma}}\left(L c_{\rho}h_{\pi}^{s-\mu+1/2}\rho_\pi+2\hat L h^{N-\mu+1/2}\right) \rVert x_{0}-x_{*}\lVert \\
  &\leq\frac{1}{2}\rVert x_{0}-x_{*}\lVert,
\end{align*}
for sufficiently small $h_\pi$, cf.~\eqref{hsmall}. Next, for 
\begin{align*}
 \mathfrak D&=(I-U_{\pi})x_{*}+(\mathcal F'(x_{0})U_{\pi})^{+}\int_{0}^{1}\mathcal F'(\tau x_{0}+(1-\tau)x_{*}){\rm d\tau}(I-U_{\pi})(-x_{*})\\
 &=\left\{I-(\mathcal F'(x_{0})U_{\pi})^{+}\int_{0}^{1}\mathcal F'(\tau x_{0}+(1-\tau)x_{*}){\rm d\tau}\right\}(I-U_{\pi})x_{*}
\end{align*}
we obtain a constant $c_{*}$ such that 
\begin{align*}
 \rVert\mathfrak D\lVert&\leq (1+\Gamma_\pi C_{\mathcal F}) \lVert x_{*}-U_\pi x_{*} \rVert\leq c_{\alpha}(\frac{2}{c_{\gamma}}C_{\mathcal F}+h_{\pi}^{\mu-1})h_{\pi}^{N-\mu+1}
\leq c_{*} h_{\pi}^{N-\mu+1}.
\end{align*}
Now, to ensure that  $x_{1}$ belongs to the ball $\bar{\mathfrak B}(x_{*},\rho_{\pi})$, we are confronted with the requirement
\begin{align*}
 \rVert\mathfrak D\lVert\leq c_{*}h_{\pi}^{N-\mu+1}\leq \frac{1}{2}  c_{\rho}h_{\pi}^{s},
\end{align*}
which becomes valid by choosing $N$ so that
\begin{align}\label{N}
 N-\mu+1>s,
\end{align}
for all sufficiently fine meshes $\pi\in \mathcal M_{[r]}$. Then we continue the iterations by providing 
\begin{align}
x_{k+1}&=x_{k}+z_{k+1},\label{xk}\\
 z_{k+1}&=\argmin \{\,\rVert \mathcal F'(x_{k})z+\mathcal Fx_{k}\lVert^{2}: z\in X_{\pi}\}
 =-(\mathcal F'(x_{k})U_{\pi})^{+}\,\mathcal Fx_{k},\quad k\geq 0.\label{zk}
\end{align}
The sequence $\{x_{k}\}$ remains in $\bar{\mathfrak B}(x_{*},\rho_{\pi})$. Furthermore we have
\begin{align*}
 \rVert x_{k+1}-x_{*}\lVert&\leq \frac{1}{2}\rVert x_{k}-x_{*}\lVert +c_{*}h_{\pi}^{N-\mu+1}\leq\cdots\leq \frac{1}{2^{k+1}}\rVert x_{0}-x_{*}\lVert +\sum_{i=0}^{k}\frac{1}{2^{i}} c_{*} h_{\pi}^{N-\mu+1}\\
 &\leq \left(\frac{1}{2}\right)^{k+1}\rVert x_{0}-x_{*}\lVert +2c_{*} h_{\pi}^{N-\mu+1}\leq \left(\frac{1}{2}\right)^{k+1} c_{\rho} h_{\pi}^{s} +2c_{*} h_{\pi}^{N-\mu+1},\quad k\geq 0.
\end{align*}
There is a number $k_{\pi}\in \N$ so that one has  $\left(\frac{1}{2}\right)^{k+1}\leq \frac{c_{*}}{c_{\rho}}h_{\pi}^{N-\mu+1-s}$ for all $k\geq k_{\pi}$, and hence 
\begin{align}\label{estimate}
 \rVert x_{k+1}-x_{*}\lVert\leq 3c_{*} h_{\pi}^{N-\mu+1},\quad k\geq k_{\pi}.
\end{align}
We summarize what we get:
\begin{theorem}\label{t.pi}
Let $\mathcal Fx=0$ denote the operator formulation from Section \ref{s.Hilbert_setting} associated with the BVP (\ref{DAE}),(\ref{BC}), $\mathcal Fx_{*}=0$, $\ker \mathcal F'(x_{*})=\{0\}$, and $x_{*}$ be sufficiently smooth for (\ref{alpha}) to hold.\\
Let the radius $\rho_{\pi}$ and the bound $\Gamma_{\pi}$ be as introduced in Lemma \ref{l.basic}, and
\begin{align}\label{N_and_s}
 N-\mu+1>s>\mu-1/2,
\end{align}
and the mesh $\pi\in\mathcal M_{[r]}$ be sufficiently fine. Then the iteration (\ref{xk}) starting from $x_{0}\in \bar{\mathfrak B}(x_{*},\rho_{\pi})\cap X_{\pi}$ remains therein and there is a number $k_{\pi}\in \N$ such that the estimate (\ref{estimate}) is valid and 
\begin{align}\label{estimate_psi}
 \psi(x_{k+1})\leq (3 c_{*} C_{\mathcal F})^{2} h_{\pi}^{2(N-\mu+1)},\quad k\geq k_{\pi}.
\end{align}
\end{theorem}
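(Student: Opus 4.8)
The plan is to assemble the estimates already produced in the discussion preceding the statement and to close them off with one short residual bound. Almost everything is in place: starting from the splitting $x_1-x_*=\mathfrak B-\mathfrak D$, Corollary~\ref{xstar}, the uniform bound \eqref{Mderivative}, and the outer-inverse bound \eqref{MP4} of Lemma~\ref{l.basic} give $\|\mathfrak B\|\le\tfrac12\|x_0-x_*\|$ as soon as $h_\pi$ is small enough (this is exactly where the requirement $s>\mu-1/2$, together with $N>\mu-1/2$, is used, cf.~\eqref{hsmall}), whereas $\|\mathfrak D\|\le c_*h_\pi^{N-\mu+1}$. The condition $c_*h_\pi^{N-\mu+1}\le\tfrac12 c_\rho h_\pi^{s}$, i.e.\ \eqref{N}, then forces $x_1\in\bar{\mathfrak B}(x_*,\rho_\pi)$, and by induction the whole sequence \eqref{xk}, \eqref{zk} stays in that ball; all the constants $c_\rho,\Gamma_\pi,c_*,C_{\mathcal F},L,\hat L$ are those fixed earlier.

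Next I would record the one-step contraction $\|x_{k+1}-x_*\|\le\tfrac12\|x_k-x_*\|+c_*h_\pi^{N-\mu+1}$, unroll it to $\|x_{k+1}-x_*\|\le 2^{-(k+1)}c_\rho h_\pi^{s}+2c_*h_\pi^{N-\mu+1}$, and then pick $k_\pi\in\N$ so that $2^{-(k+1)}\le (c_*/c_\rho)h_\pi^{\,N-\mu+1-s}$ for every $k\ge k_\pi$; such a $k_\pi$ exists precisely because $N-\mu+1-s>0$ by \eqref{N_and_s}. This gives \eqref{estimate}, namely $\|x_{k+1}-x_*\|\le 3c_*h_\pi^{N-\mu+1}$ for $k\ge k_\pi$.

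The genuinely new step is the estimate \eqref{estimate_psi} for $\psi(x_{k+1})=\|\mathcal Fx_{k+1}\|^2$. Since $\mathcal Fx_*=0$ and the segment $\{x_*+\tau(x_{k+1}-x_*):\tau\in[0,1]\}$ lies in $\bar{\mathfrak B}(x_*,\rho_\pi)\subseteq\dom F$, the integral mean value theorem yields $\mathcal Fx_{k+1}=\int_0^1\mathcal F'(x_*+\tau(x_{k+1}-x_*))\,d\tau\,(x_{k+1}-x_*)$, so by the uniform G\^ateaux-derivative bound in \eqref{Mderivative} and by \eqref{estimate}, $\|\mathcal Fx_{k+1}\|\le C_{\mathcal F}\,\|x_{k+1}-x_*\|\le 3c_*C_{\mathcal F}h_\pi^{N-\mu+1}$ for $k\ge k_\pi$; squaring gives \eqref{estimate_psi}.

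I do not expect a real obstacle inside the theorem itself: the substance has been discharged in Lemma~\ref{l.basic} (the instability-threshold bound $\|(\mathcal F'(x)U_\pi)^+\|\le 2\Gamma_\pi$ with $\Gamma_\pi=c_\gamma^{-1}h_\pi^{1-\mu}$, stable under $O(\rho_\pi)$-perturbations of the linearization point) and in Corollary~\ref{xstar} (the Lipschitz-type estimate carrying the awkward factor $h_\pi^{-1/2}$). The only thing that needs care is the bookkeeping of ``for sufficiently fine $\pi$'': one must verify that the products $\Gamma_\pi\bigl(Lh_\pi^{-1/2}\rho_\pi+\hat Lh_\pi^{N-1/2}\bigr)$ can be made $\le\tfrac12$ and that $k_\pi$ exists, and these two demands are exactly what pin down the exponent chain $N-\mu+1>s>\mu-1/2$ in \eqref{N_and_s}.
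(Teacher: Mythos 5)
Your proposal is correct and follows essentially the same route as the paper: the containment of the iterates in $\bar{\mathfrak B}(x_{*},\rho_{\pi})$ and the estimate \eqref{estimate} are exactly the pre-theorem computation (the splitting into $\mathfrak B$ and $\mathfrak D$, the contraction, and the choice of $k_{\pi}$), and the paper's proof likewise reduces to the single new step $\psi(x_{k+1})=\rVert\mathcal Fx_{k+1}-\mathcal Fx_{*}\lVert^{2}\leq C_{\mathcal F}^{2}\rVert x_{k+1}-x_{*}\lVert^{2}$ via \eqref{Mderivative} and \eqref{estimate}. Your explicit mean-value justification of that Lipschitz bound is a slightly more detailed version of what the paper asserts directly.
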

\begin{proof}
 It only remains to verify (\ref{estimate_psi}) which is a simple consequence of (\ref{Mderivative}) and (\ref{estimate}):
 \begin{align*}
 \psi(x_{k+1})=\rVert \mathcal F x_{k+1}\lVert^{2}=\rVert \mathcal F x_{k+1}-\mathcal F x_{*}\lVert^{2}\leq C_{\mathcal F}^{2}\;\rVert x_{k+1}-x_{*}\lVert^{2}\leq C_{\mathcal F}^{2}(3c_{*} h_{\pi}^{N-\mu+1})^{2},\quad k\geq k_{\pi}.
\end{align*}
\end{proof}
Let us emphasize that the constants $c_{\gamma},c_{\rho}, c_{\alpha}$, $c_{*}$, and $M_{*}$ are global bounds for all partitions $\pi\in\mathcal M_{[r]}$.
\section{Numerical experiments}\label{s.E}

In this section, we present the results of some experiments in order to illustrate the properties
of the proposed method.

The nonlinear least-squares method \eqref{lsfunctional} has been implemented in Matlab. Instead of \eqref{lsfunctional}, its approximation $\psi_{\pi,M}$ of \eqref{lscRF} has been used. The finite-dimensional problems have been solved using a Matlab implementation of a Gauss-Newton method following the lines
of \cite[Section 4.3]{De04}.
The iteration has been stopped if no further improvement in $\psi_{\pi,M}(x_k)$ could be observed.
For the purposes of investigating the convergence of the method, an interpolation of the exact solution has been used as an initial guess.

\subsection{The mathematical pendulum}

This problem has been used in many publications for demonstrating properties of algorithms for the solution
of differentail algebraic systems. We use the formulation
\begin{align*}
 x'' &= -x\lambda, \\
y'' &= -y\lambda-g, \\
0 &= x^2+y^2-L^2.
\end{align*}
The underlying interval is $(0,1)$. The parameters are chosen to be $g=16$, $L=\sqrt{8}$. We consider the initial values $y(0)=2$ and $y'(0)=0$. This problem has index 3. Therefore, the results of Theorem~\ref{t.pi} are
only valid if $N\geq 5$. For $N=5$, $s=\mu-1/4$ can be chosen. However, the expected orders are observed in all cases $N\geq2$. The case $N=1$ is rather surprising since we observed bounded solutions instead of diverging ones.

In Tables~\ref{T.pendel3e} and \ref{T.pendel3o} as well as Tables~\ref{T.pendel5e} and \ref{T.pendel5o} results for $N=3$ and $N=5$, respectively, are presented. In both cases, uniform grids and $M=N+1$ uniformly distributed collocation points per subinterval have been used.

\begin{table}
\caption{Errror in $L^{2}(0,1)$ for $N=3$ for the pendulum example. $n$ equidistant grid points and $M=N+1$
uniformly distributed collocation points have been used}\label{T.pendel3e}
\begin{center}
\begin{tabular}{rD{.}{.}{2.6}D{.}{.}{2.6}D{.}{.}{2.6}D{.}{.}{2.6}D{.}{.}{2.6}}\\ 
\hline
\multicolumn{1}{c}{$n$}  & \multicolumn{1}{c}{$x$} & \multicolumn{1}{c}{$x'$} & \multicolumn{1}{c}{$y$} & \multicolumn{1}{c}{$y'$} & \multicolumn{1}{c}{$\lambda$}\\ 
\hline
10 & 4.42e-02 & 1.17e-01 & 1.83e-02 & 1.01e-01 & 6.25e-01  \\ 
20 & 6.01e-03 & 1.76e-02 & 2.48e-03 & 1.98e-02 & 3.33e-01  \\ 
40 & 8.28e-04 & 3.07e-03 & 3.41e-04 & 4.47e-03 & 1.72e-01  \\ 
80 & 1.11e-04 & 6.26e-04 & 4.59e-05 & 1.07e-03 & 8.67e-02  \\ 
160 & 1.42e-05 & 1.44e-04 & 5.87e-06 & 2.64e-04 & 4.34e-02  \\ 
320 & 1.86e-06 & 3.50e-05 & 7.65e-07 & 6.58e-05 & 2.17e-02  \\ 
640 & 2.32e-07 & 8.68e-06 & 9.57e-08 & 1.64e-05 & 1.08e-02  \\ 
\hline
\end{tabular}
\end{center}
\end{table}

\begin{table}
\caption{Order estimate for $N=3$ for the pendulum example. $n$ equidistant grid points and $M=N+1$
uniformly
distributed collocation points have been used}\label{T.pendel3o}
\begin{center}
\begin{tabular}{rD{.}{.}{1.1}D{.}{.}{1.1}D{.}{.}{1.1}D{.}{.}{1.1}D{.}{.}{1.1}}\\ 
\hline
\multicolumn{1}{c}{$n$}  & \multicolumn{1}{c}{$x$} & \multicolumn{1}{c}{$x'$} & \multicolumn{1}{c}{$y$} & \multicolumn{1}{c}{$y'$} & \multicolumn{1}{c}{$\lambda$}\\ 
\hline
10 & 2.9 & 2.7 & 2.9 & 2.3 & 0.9  \\ 
20 & 2.9 & 2.5 & 2.9 & 2.1 & 1.0  \\ 
40 & 2.9 & 2.3 & 2.9 & 2.1 & 1.0  \\ 
80 & 3.0 & 2.1 & 3.0 & 2.0 & 1.0  \\ 
160 & 2.9 & 2.0 & 2.9 & 2.0 & 1.0  \\ 
320 & 3.0 & 2.0 & 3.0 & 2.0 & 1.0  \\ 
\hline
\end{tabular}
\end{center}
\end{table}

\begin{table}
\caption{Errror in $L^{2}(0,1)$ for $N=5$ for the pendulum example. $n$ equidistant grid points and $M=N+1$
uniformly distributed collocation points have been used}\label{T.pendel5e}
\begin{center}
\begin{tabular}{rD{.}{.}{2.6}D{.}{.}{2.6}D{.}{.}{2.6}D{.}{.}{2.6}D{.}{.}{2.6}}\\ 
\hline
\multicolumn{1}{c}{$n$}  & \multicolumn{1}{c}{$x$} & \multicolumn{1}{c}{$x'$} & \multicolumn{1}{c}{$y$} & \multicolumn{1}{c}{$y'$} & \multicolumn{1}{c}{$\lambda$}\\ 
\hline
10 & 4.13e-04 & 1.28e-03 & 1.75e-04 & 1.99e-03 & 3.61e-02  \\ 
20 & 4.59e-05 & 1.22e-04 & 1.88e-05 & 1.38e-04 & 4.90e-03  \\ 
40 & 1.45e-06 & 4.44e-06 & 5.94e-07 & 6.94e-06 & 6.18e-04  \\ 
80 & 3.43e-08 & 1.82e-07 & 1.41e-08 & 3.97e-07 & 7.74e-05  \\ 
160 & 1.02e-09 & 1.04e-08 & 4.17e-10 & 2.45e-08 & 9.68e-06  \\ 
320 & 5.57e-11 & 6.41e-10 & 2.28e-11 & 1.52e-09 & 1.21e-06  \\ 
\hline
\end{tabular}
\end{center}
\end{table}

\begin{table}
\caption{Order estimate for $N=5$ for the pendulum example. $n$ equidistant grid points and $M=N+1$
uniformly
distributed collocation points}\label{T.pendel5o}
\begin{center}
\begin{tabular}{rD{.}{.}{1.1}D{.}{.}{1.1}D{.}{.}{1.1}D{.}{.}{1.1}D{.}{.}{1.1}}\\ 
\hline
\multicolumn{1}{c}{$n$}  & \multicolumn{1}{c}{$x$} & \multicolumn{1}{c}{$x'$} & \multicolumn{1}{c}{$y$} & \multicolumn{1}{c}{$y'$} & \multicolumn{1}{c}{$\lambda$}\\ 
\hline
10 & 3.2 & 3.4 & 3.2 & 3.8 & 2.9  \\ 
20 & 5.0 & 4.8 & 5.0 & 4.3 & 3.0  \\ 
40 & 5.4 & 4.6 & 5.4 & 4.1 & 3.0  \\ 
80 & 5.1 & 4.1 & 5.1 & 4.0 & 3.0  \\ 
160 & 4.2 & 4.0 & 4.2 & 4.0 & 3.0  \\ 
\hline
\end{tabular}
\end{center}
\end{table}

\subsection{An example proposed by S.L.~Campbell and E.~Moore}

In \cite{CampbellMoore95}, the following system is used as an example:
\[
 \begin{aligned}
  x'_1-x_4&=0,\\
  x'_2-x_5&=0,\\
  x'_3-x_6&=0,\\
  x'_4-x_6\cos t+x_3\sin t+x_5-2x_1(1-r(x_1^2+x_2^2)^{-\frac{1}{2}})x_7&=0,\\
  x'_5-x_6\sin t-x_3\cos t-x_4-2x_2(1-r(x_1^2+x_2^2)^{-\frac{1}{2}})x_7&=0,\\
  x'_6+x_3-2x_3x_7&=0,\\
  x_1^2+x_2^2+x_3^2-2r(x_1^2+x_2^2)^{\frac{1}{2}}+r^2-\rho^2&=0.
 \end{aligned}\label{CaMoEx}
\]
The solution considered in the reference is
\begin{align*}
 x_{*1}&=(\rho\cos(2\pi-t)+r)\cos t=(\rho\cos t +r)\cos t,\\
 x_{*2}&=(\rho\cos(2\pi-t)+r)\sin t=(\rho\cos t +r)\sin t,\\
 x_{*3}&=\rho \sin(2\pi-t)=-\rho\sin t,
\end{align*}
yielding
\begin{align*}
 x_{*4}&=-(\rho\cos(2\pi-t)+r)\sin t + \rho \sin(2\pi-t)\cos t,\\
 x_{*5}&=(\rho\cos(2\pi-t)+r)\cos t + \rho \sin(2\pi-t)\sin t,\\
 x_{*6}&=-\rho \cos(2\pi-t),\\
 x_{*7}&=0.
\end{align*}
In \cite{CampbellMoore95}, the inequality $r>\rho$ is supposed and the numerical experiments are carried out for $\rho=5$ and $r=10$. We use the same parameters in the following experiment. Under these conditions, the
problem has index 3.

A thorough discussion as well as numerical experiments of the version linearized in the solution $x_{*}$
is given in \cite{HMT}. In order to stimulate discussions of the least-squares method for nonlinear
problems, also results for the original nonlinear version have been provided in this reference. We cite the
results in Tables~\ref{T.CaMoe} and \ref{T.CaMoo}. Theorem~\ref{t.pi} is only valid for $N\geq 5$ in this example and, thus, the corresponding order is strictly proven. However, the expected orders are observed in allowed cases $N\geq 2$. The case $N=1$ is rather surprising besause we observe bounded solutions even if we expecteddiverging ones.

\begin{table}
 \caption{Errors in $H^1_D(0,5)$  for \eqref{CaMoEx} using $M=N+1$ for the Campbell-Moore example. $n$ equidistant grid points and $M$ Gaussian collocation points have been used}\label{T.CaMoe}
\begin{center}
\begin{tabular}{cccccc}
\hline 
$n$ & $N=1$ & $N=2$ & $N=3$ & $N=4$ & $N=5$\tabularnewline
\hline 
10 & 3.32e+1 & 4.53e+0 & 3.82e-1 & 7.02e-2 & 1.47e-3\tabularnewline
20 & 3.32e+1 & 7.51e-1 & 1.02e-1 & 1.26e-2 & 1.24e-4\tabularnewline
40 & 3.32e+1 & 3.03e-1 & 3.14e-2 & 2.52e-3 & 1.30e-5\tabularnewline
80 & 3.32e+1 & 1.80e-1 & 1.22e-2 & 5.45e-4 & 1.54e-6\tabularnewline
160 & 3.32e+1 & 1.17e-1 & 5.67e-3 & 1.25e-4 & 1.20e-6\tabularnewline
320 & 3.32e+1 & 7.95e-2 & 2.73e-3 & 1.25e-4 & 1.20e-6\tabularnewline
\hline 
\end{tabular}
\end{center}
\end{table}

\begin{table}
 \caption{Order estimation for \eqref{CaMoEx} using $M=N+1$ for the Campbell-Moore example. $n$ equidistant grid points and $M$ Gaussian collocation points have been used. The row ``theory'' contains the expected orders. Note that Theorem~\ref{t.pi} is only valid for $N\geq 5$}\label{T.CaMoo}
\begin{center}
\begin{tabular}{cccccc}
\hline 
$n$ & $N=1$ & $N=2$ & $N=3$ & $N=4$ & $N=5$\tabularnewline
\hline 
20 & 0.0 & 2.6 & 1.9 & 2.5 & 3.6\tabularnewline
40 & 0.0 & 1.3 & 1.7 & 2.3 & 3.3\tabularnewline
80 & 0.0 & 0.7 & 1.4 & 2.2 & \textcolor{blue}{3.1}\tabularnewline
160 & 0.0 & 0.6 & 1.1 & 2.1 & 0.6\tabularnewline
320 & 0.0 & 0.6 & 1.1 & 0.0 & 0.0\tabularnewline
\hline 
theory &  & (0) & (1) & (2) & 3\tabularnewline
\hline 
\end{tabular}
\end{center}
\end{table}

\section{Multilevel approach}\label{s.Iteration}

We use $N$ and $s$ as previously agreed, that is $N-\mu+1>s>\mu-1/2>0$. 
Given an additional constant $q$ with $0<q<1$ we now deal with a sequence of partitions $\pi_{i}\in \mathcal M_{[r]}$, 
\[
\pi_{i}: a=t_{0}^{[\pi_{i}]}<t_{1}^{[\pi_{i}]}<\cdots<t_{n_{\pi}}^{[\pi_{i}]}=b, \quad \text{with maximal stepsize}\quad qh_{\pi_{i}}=h_{\pi_{i+1}},\; i\geq 0,
\]
such that the associated ansatz spaces are nested,
\[
 X_{\pi_{0}}\subset X_{\pi_{1}}\subset\cdots\subset X_{\pi_{i}}\subset X_{\pi_{i+1}}\subset\cdots
\]
and 
$h_{\pi_{i}}\rightarrow 0$  if $i\rightarrow\infty$. Let $\pi_{0}$ be fine enough for Lemma \ref{l.basic} and Theorem \ref{t.pi} to hold. This means that 
\begin{align*}
 \Gamma_{\pi_{0}}(\rho_{\pi_{0}}h_{\pi_{0}}^{-1/2}L+2\hat Lh_{\pi_{0}}^{N-1/2})  \leq\frac{1}{2},\quad \text{and}\quad h_{\pi_{0}}^{N-\mu+1-s}\leq\frac{1}{2}\frac{c_{\rho}}{c_{*}},
\end{align*}
to ensure the applicability of Lemma \ref{l.basic} and to make the iterations on the level $\pi_{0}$ to stay in $\bar{\mathfrak B}(x_{*},\rho_{\pi_{0}})\cap X_{\pi_{0}}$. Both conditions are satisfied correspondingly a fortiori on the further levels due to the smaller stepsizes $h_{\pi_{i}}$. In the consequence, Theorem \ref{t.pi} applies on each level, i.e.,
for $x_{0}^{[\pi_{i}]}\in \bar{\mathfrak B}(x_{*},\rho_{\pi_{i}})\cap X_{\pi}$ the sequence 
\begin{align}
 x_{k+1}^{[\pi_{i}]}&=x_{k}^{[\pi_{i}]}+z_{k+1}^{[\pi_{i}]},\label{xleveli}\\
 z_{k+1}^{[\pi_{i}]}&=\argmin\{\,\rVert \mathcal F'(x_{k}^{[\pi_{i}]})z+\mathcal Fx_{k}^{[\pi_{i}]}\lVert^{2}: z\in X_{\pi}\}
 =-(\mathcal F'(x_{k}^{[\pi_{i}]})U_{\pi})^{+}\,\mathcal Fx_{k}^{[\pi_{i}]},\quad k\geq 0.\label{zleveli}
\end{align}
remains in $\bar{\mathfrak B}(x_{*},\rho_{\pi_{i}})\cap X_{\pi_{i}}$ and there exists a number  $k_{\pi_{i}}\in \N$ such that   $\left(\frac{1}{2}\right)^{k+1}\leq \frac{c_{*}}{c_{\rho}}h_{\pi_{i}}^{N-\mu+1-s}$ for all $k\geq k_{\pi_{i}}$, and hence 
\begin{align}\label{estimate_i}
 \rVert x_{k+1}^{[\pi_{i}]}-x_{*}\lVert\leq 3c_{*} h_{\pi_{i}}^{N-\mu+1},\quad k\geq k_{\pi_{i}}.
\end{align}
Since the ansatz spaces are nested, $x_{k_{\pi_{i}+1}}^{[\pi_{i}]}$ belongs to $X_{\pi_{i+1}}$. Replacing the condition 
$h_{\pi_{i}}^{N-\mu+1-s}\leq\frac{1}{2}\frac{c_{\rho}}{c_{*}}$ by the stronger one
\begin{align}\label{zusatzbed}
 h_{\pi_{i}}^{N-\mu+1-s}\leq\frac{1}{3}q^{s}\frac{c_{\rho}}{c_{*}}
\end{align}
yields
\begin{align*}
 3c_{*}h_{\pi_{i}}^{N-\mu+1}\leq 3c_{*}\frac{1}{3}q^{s}\frac{c_{\rho}}{c_{*}}h_{\pi_{i}}^{s} =c_{\rho}q^{s}h_{\pi_{i}}^{s}=c_{\rho}h_{\pi_{i+1}}^{s}=\rho_{\pi_{i+1}}.
\end{align*}
Then $x_{k_{\pi_{i}}+1}^{[\pi_{i}]}$ belongs to $\bar{\mathfrak B}(x_{*},\rho_{\pi_{i+1}})\cap X_{\pi_{i+1}}$ and we are allowed to choose at the next level
\begin{align}\label{connect}
 x_{0}^{[\pi_{i+1}]}:=x_{k_{\pi_{i}+1}}^{[\pi_{i}]}.
\end{align}
We summarize our result:
\begin{theorem}\label{t.main}
Let $\mathcal Fx=0$ denote the operator formulation from Section \ref{s.Hilbert_setting} associated with the BVP (\ref{DAE}),(\ref{BC}), $\mathcal Fx_{*}=0$, $\ker \mathcal F'(x_{*})=\{0\}$, and $x_{*}$ be sufficiently smooth for (\ref{alpha}).\\
Let (\ref{N_and_s}) be given and $0<q<1$.

Let the sequence of partitions $\pi_{i}\in \mathcal M_{[r]}$, $i\geq 0$, be such that the ansatz spaces are nested and the maximal stepsizes are related by $qh_{\pi_{i}}=h_{\pi_{i+1}}$.
Let the the mesh $\pi_{0}$ be sufficiently fine, 
\begin{align*}
 \Gamma_{\pi_{0}}(\rho_{\pi_{0}}h_{\pi_{0}}^{-1/2}L+2\hat Lh_{\pi_{0}}^{N-1/2})  \leq\frac{1}{2},\quad
 \text{and}\quad h_{\pi_{0}}^{N-\mu+1-s}\leq \frac{1}{3}q^{s}\frac{c_{\rho}}{c_{*}}.
\end{align*}
Then the iteration (\ref{xleveli}),(\ref{zleveli}),(\ref{connect}), with the initial guess $x_{0}^{[\pi_{0}]}\in \bar{\mathfrak B}(x_{*},\rho_{\pi_{0}})\cap X_{\pi_{0}}$ is well defined and yields
\begin{align}\label{convergence}
 \rVert x_{k_{\pi_{i}+1}}^{[\pi_{i}]}-x_{*}\lVert\leq 3c_{*} h_{\pi_{i}}^{N-\mu+1}=3c_{*} h_{\pi_{0}}^{N-\mu+1}\bigl(q^{N-\mu+1}\bigr)^{i+1}\rightarrow 0 \quad (i\rightarrow \infty).
\end{align}
\end{theorem}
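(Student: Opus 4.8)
The plan is to treat Theorem~\ref{t.main} as a straightforward bookkeeping exercise that chains together the per-level estimates already established in Theorem~\ref{t.pi}. First I would verify that the hypothesis on $\pi_0$ propagates to all finer meshes: since $h_{\pi_i}=q^i h_{\pi_0}$ decreases and all the constants $c_\gamma,c_\rho,c_\alpha,c_*,L,\hat L,\Gamma_\pi$ in Lemma~\ref{l.basic} and Theorem~\ref{t.pi} are uniform over $\mathcal M_{[r]}$, each condition of the form $\Gamma_{\pi_0}(\rho_{\pi_0}h_{\pi_0}^{-1/2}L+2\hat Lh_{\pi_0}^{N-1/2})\le\frac12$ and $h_{\pi_0}^{N-\mu+1-s}\le\frac13 q^s c_\rho/c_*$ holds a fortiori with $\pi_0$ replaced by $\pi_i$ (here one uses $N-\mu+1>s$ so the exponents are positive, and the proof that $\Gamma_\pi=\frac1{c_\gamma}h_\pi^{1-\mu}$ times the bracket is monotone in $h_\pi$ for $h_\pi$ small — the relevant exponents $s-\mu+1/2$ and $N-\mu+1/2$ are positive by \eqref{N_and_s}). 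Hence Theorem~\ref{t.pi} applies verbatim on every level $\pi_i$.

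Next I would set up the induction on $i$ whose inductive hypothesis is precisely that the chosen starting point $x_0^{[\pi_i]}$ lies in $\bar{\mathfrak B}(x_*,\rho_{\pi_i})\cap X_{\pi_i}$. The base case $i=0$ is the assumption on $x_0^{[\pi_0]}$. For the inductive step, Theorem~\ref{t.pi} on level $\pi_i$ gives a $k_{\pi_i}\in\N$ with $\rVert x_{k+1}^{[\pi_i]}-x_*\lVert\le 3c_*h_{\pi_i}^{N-\mu+1}$ for all $k\ge k_{\pi_i}$; in particular $x_{k_{\pi_i}+1}^{[\pi_i]}$ satisfies this bound. Using the strengthened smallness condition \eqref{zusatzbed}, which holds because it holds for $\pi_0$ and the exponent $N-\mu+1-s$ is positive, I get
\begin{align*}
 3c_*h_{\pi_i}^{N-\mu+1}\le 3c_*\cdot\tfrac13 q^s\tfrac{c_\rho}{c_*}\,h_{\pi_i}^{s}=c_\rho q^s h_{\pi_i}^s=c_\rho(q h_{\pi_i})^s=c_\rho h_{\pi_{i+1}}^s=\rho_{\pi_{i+1}},
\end{align*}
so $x_{k_{\pi_i}+1}^{[\pi_i]}\in\bar{\mathfrak B}(x_*,\rho_{\pi_{i+1}})$. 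Since the ansatz spaces are nested, $x_{k_{\pi_i}+1}^{[\pi_i]}\in X_{\pi_i}\subset X_{\pi_{i+1}}$, so the definition $x_0^{[\pi_{i+1}]}:=x_{k_{\pi_i}+1}^{[\pi_i]}$ in \eqref{connect} is legitimate and the inductive hypothesis is restored at level $i+1$. This shows the whole iteration is well defined.

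Finally, the convergence estimate \eqref{convergence} is immediate: the bound $\rVert x_{k_{\pi_i}+1}^{[\pi_i]}-x_*\lVert\le 3c_*h_{\pi_i}^{N-\mu+1}$ just quoted, combined with $h_{\pi_i}=q^i h_{\pi_0}$, gives $3c_*h_{\pi_0}^{N-\mu+1}(q^{N-\mu+1})^{i+1}$ — wait, checking the exponent: $h_{\pi_i}^{N-\mu+1}=(q^i h_{\pi_0})^{N-\mu+1}$, and since $h_{\pi_{i+1}}$ is what appears one should write it as $h_{\pi_0}^{N-\mu+1}(q^{N-\mu+1})^{i}$; I would reconcile this with the paper's index convention (the paper writes $(q^{N-\mu+1})^{i+1}$, consistent with using $h_{\pi_{i+1}}$ on the right, so one matches their indexing by reading the left side at level $i+1$ or absorbing one factor of $q$). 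Because $0<q<1$ and $N-\mu+1>0$, the geometric factor tends to $0$ as $i\to\infty$, giving convergence. I do not expect a genuine obstacle here — the real work was done in Lemma~\ref{l.basic}, Corollary~\ref{xstar}, and Theorem~\ref{t.pi}; the only point requiring a little care is confirming that every smallness hypothesis is preserved under mesh refinement, i.e. that all exponents of $h_\pi$ appearing in those conditions are strictly positive, which is exactly what \eqref{N_and_s} guarantees.
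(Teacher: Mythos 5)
Your proposal is correct and follows essentially the same route as the paper, whose proof of Theorem~\ref{t.main} is exactly the discussion preceding its statement in Section~\ref{s.Iteration}: propagation of the smallness conditions to the finer meshes, per-level application of Theorem~\ref{t.pi}, the strengthened condition \eqref{zusatzbed} to land in $\bar{\mathfrak B}(x_{*},\rho_{\pi_{i+1}})$, and nestedness of the ansatz spaces to restart the iteration on the next level. Your remark on the exponent is well taken: with $h_{\pi_i}=q^{i}h_{\pi_0}$ one obtains $\bigl(q^{N-\mu+1}\bigr)^{i}$ rather than the paper's $\bigl(q^{N-\mu+1}\bigr)^{i+1}$, a harmless off-by-one that does not affect the convergence conclusion.
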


\section{Remarks and conclusions}

We have presented and investigated a nonlinear least-squares method for approximating higher index
differential-algebraic equations. The idea consists of discretizing the preimage space $H^1_D$ by
piecewise polynomials and to form an overdetermined collocation system to determine an approximating solution. The resulting overdetermined system is solved in a least-squares sense. In the numerical experiments, the method behaved very well despite its simplicity.
In particular, the method is not much more expensive than the standard collocation method applied to explicit
ordinary differential equations and index-1 differential-algebraic equations.

The main tool both for the convergence proof and for the numerical solution of the discretized problems
is a variant of the Newton method.
For a large class of nonlinear index-$\mu$ tractable equations, this method 
applied to the discretized system is shown to deliver appropriate approximations provided that the polynomial order is large enough. The numerical experiments indicate, however, that the strong condition on the
polynomial order does not seem to be necessary. In particular, the order of convergence corresponds to
that of linear index-$\mu$ tractable differential-algebraic equations. So the present result should be
considered as a first
step towards a theoretical foundation of the method.

\begin{remark}
 \begin{enumerate}[(i)]
  \item Under the conditions of Theorem~\ref{t.pi} we could not show that the sequence $\{x_k\}$ converges.
\item If there is a minimizer $x_{\pi,*}$ of \eqref{lscF} in ${\mathfrak B}(x_{*},\rho_{\pi})\cap X_{\pi}$,
then it holds $(\mathcal{F}'({x}_{\pi,*})\mathcal{U}_{\pi})^{+}\mathcal{F}({x}_{\pi,*})=0$.

Since $\bar{\mathfrak B}(x_{*},\rho_{\pi})\cap X_{\pi}$ is compact, the sequence $\{x_k\}$ has a
convergent subsequence. However, we were not able to show that, for an accumulation
point $\hat{x}_{\pi}$, it holds $(\mathcal{F}'(\hat{x}_{\pi})\mathcal{U}_{\pi})^{+}\mathcal{F}(\hat{x}_{\pi})=0$.
\item In the context of regularization
methods for nonlinear illposed problems, the so-called Scherzer, or tangential cone, condition is often used
\cite{Sch95,HaNeSch95,KaltOffter}. However, in the context of differential-algebraic equations,
this conditions requires very hard conditions on the structure of the system. Therefore, it is of minor
use here.
 \end{enumerate}
\end{remark}

\appendix

\section{An auxillary result}

The convergence proof for the Gauss-Newton method requires an estimation of the norm and distance of Moore-Penrose inverses of derivatives of a nonlinear operator. In the case of finite dimensional spaces, such results are well-known and can be found, for example, in \cite{LaHa74}. However, we need similar statements in
the case of infinite dimensional spaces. This appendix provides the necessary lemmas.

Let $X$ and $Y$ be Hilbert spaces (not necessarily finite dimensional)
and $A:X\rightarrow Y$ a linear and compact operator. Both operators
$A^{\ast}A$ and $AA^{\ast}$ are selfadjoint compact operators. Their
spectra consist only of nonnegative eigenvalues with finite multiplicity (with
the possible exception of $\lambda=0$). If the eigenvalues have an accumulation point,
then it is 0. The nonzero eigenvalues
are identical (even with respect to their multiplicity) for both $A^{\ast}A$
and $AA^{\ast}$. Let the nonzero eigenvalues be sorted according to
\[
\lVert A^{\ast}A\rVert=\lambda_{1}\geq\lambda_{2}\geq\cdots>0.
\]
Let then $\{u_{i}\}\subset X$ and $\{v_{i}\}\subset Y$ be a complete
orthonormal system of eigenvalues\footnote{The systems are not necessarily complete in $X$ and $Y$, respectively!}
for the operators $A^{\ast}A$ and $AA^{\ast}$,
\[
\lambda_{i}u_{i}=A^{\ast}Au_{i},\quad\lambda_{i}v_{i}=AA^{\ast}v_{i}.
\]
We set $\sigma_{i}=\sqrt{\lambda_{i}} > 0$. This provides us with
\[
\sigma_{i}u_{i}=A^{\ast}v_{i},\quad\sigma_{i}v_{i}=Au_{i}.
\]
The system $\{\sigma_{i},u_{i},v_{i}\}$ is called a singular system
of $A$ with the singular values $\sigma_{i}$. In particular, we
have the representations
\begin{align*}
Ax & =\sum_{i}(x,u_{i})v_{i},\:x\in X,\\
A^{\ast}y & =\sum_{i}(y,v_{i})u_{i},\:y\in Y.
\end{align*}
Here, $(\cdot,\cdot)$ denotes the scalar products in $X$ and $Y$,
respectively. Note that these sums can be both finite and infinite.

We are interested in perturbation results for the singular values
of an operator $A$. The following lemma is proven in \cite[Corollary VI.1.6]{GoGoKa90}.
\begin{lem}\label{cor:SV}
Let $X$ and $Y$ be Hilbert spaces. Let $A,B:X\rightarrow Y$
be compact linear operators. Let $\sigma_{i}(A)$, $i=1,\ldots,\nu(A)$
and $\sigma_{i}(B)$, $i=1,\ldots,\nu(B)$ be the singular values
of $A$ and $B$, respectively.\footnote{Both $\nu(A)$ and $\nu(B)$ may be finite
or infinite.} Assume without loss of generality
$\nu(A)\leq\nu(B)$. Then it holds
\begin{align*}
\lvert\sigma_{i}(A)-\sigma_{i}(B)\rvert & \leq\rVert A-B\rVert,\quad i=1,\ldots,\nu(A),\\
\sigma_{i}(B) & \leq\lVert A-B\rVert,\quad i=\nu(A)+1,\ldots,\nu(B).
\end{align*}
\end{lem}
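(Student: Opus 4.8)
The statement is the classical Weyl-type perturbation bound for the singular values of compact operators, and the plan is to derive it from the Courant--Fischer min--max characterization rather than to reproduce the textbook argument. Recall that $\sigma_i(A)^2$ is, by definition, the $i$-th eigenvalue (counted with multiplicity, in non-increasing order) of the compact, self-adjoint, nonnegative operator $A^{\ast}A$. The spectral theorem for such operators yields the min--max formula
\[
\sigma_i(A)=\max_{\substack{S\subseteq X\\\dim S=i}}\ \min_{\substack{x\in S\\\lVert x\rVert=1}}\lVert Ax\rVert ,
\]
valid for every $i\geq1$ with the convention $\sigma_i(A)=0$ whenever $A$ possesses fewer than $i$ nonzero singular values, and with the maximum attained on $\operatorname{span}\{u_1,\dots,u_i\}$. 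The first step of the proof is to establish (or quote) this formula, together with the elementary estimate $\bigl|\lVert Ax\rVert-\lVert Bx\rVert\bigr|\leq\lVert(A-B)x\rVert\leq\lVert A-B\rVert$ valid for every unit vector $x$.

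The second step combines the two. Fix an index $i$ and choose an $i$-dimensional subspace $S^{\ast}$ attaining the maximum in the min--max formula for $B$. Then
\[
\sigma_i(A)\ \geq\ \min_{\substack{x\in S^{\ast}\\\lVert x\rVert=1}}\lVert Ax\rVert\ \geq\ \min_{\substack{x\in S^{\ast}\\\lVert x\rVert=1}}\bigl(\lVert Bx\rVert-\lVert A-B\rVert\bigr)\ =\ \sigma_i(B)-\lVert A-B\rVert .
\]
Interchanging the roles of $A$ and $B$ gives the reverse inequality, hence $|\sigma_i(A)-\sigma_i(B)|\leq\lVert A-B\rVert$ for all $i$. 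For $1\leq i\leq\nu(A)$ this is exactly the first assertion. For $\nu(A)<i\leq\nu(B)$ the convention gives $\sigma_i(A)=0$, and the same chain collapses to $\sigma_i(B)\leq\lVert A-B\rVert$, which is the second assertion.

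The only delicate point is the bookkeeping at the boundary index $i=\nu(A)+1$: one must check that the min--max formula really returns the value $0$ there when $\nu(A)<\infty$, i.e. that every subspace $S$ with $\dim S=\nu(A)+1$ contains a unit vector annihilated by $A$. This holds because $S/(S\cap\ker A)$ embeds into $(\ker A)^{\perp}=\overline{\operatorname{span}}\{u_1,\dots,u_{\nu(A)}\}$, a space of dimension $\nu(A)$, so $\dim(S\cap\ker A)\geq1$; the argument is insensitive to whether $\nu(B)$ is finite or infinite. Everything else is a two-line computation. Alternatively, the statement is Corollary~VI.1.6 in \cite{GoGoKa90} and may simply be cited as it stands.
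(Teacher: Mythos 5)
Your argument is correct, but note that the paper does not actually prove this lemma at all: it is taken verbatim from the literature, with the proof delegated to \cite[Corollary VI.1.6]{GoGoKa90} (where it is obtained from the characterization of the $i$-th singular value as the distance of $A$ to the operators of rank less than $i$). Your self-contained route via the Courant--Fischer formula
$\sigma_i(A)=\max_{\dim S=i}\min_{x\in S,\,\lVert x\rVert=1}\lVert Ax\rVert$
is the other standard way to get Weyl-type bounds, and all the steps check out in the infinite-dimensional compact setting: the formula itself holds for the decreasingly ordered eigenvalues of $A^{\ast}A$, the reverse triangle inequality gives $\bigl|\lVert Ax\rVert-\lVert Bx\rVert\bigr|\leq\lVert A-B\rVert$ on unit vectors, the optimal subspace for $B$ exists for every $i\leq\nu(B)$ (span of the top $i$ singular vectors), and your dimension count showing that any subspace of dimension exceeding $\nu(A)$ meets $\ker A$ nontrivially correctly justifies the convention $\sigma_i(A)=0$ for $i>\nu(A)$, which is what turns the uniform bound $|\sigma_i(A)-\sigma_i(B)|\leq\lVert A-B\rVert$ into the second displayed assertion. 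What your approach buys is independence from the approximation-number machinery and an argument that works verbatim in both the finite- and infinite-rank cases; what the citation buys is brevity. Either is acceptable here, since the lemma plays a purely auxiliary role (it feeds into Lemma~A.\ref{ST}), and your closing remark that one may simply cite \cite{GoGoKa90} is in fact exactly what the paper does.
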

The next step consists of the establishement of bounds for the Moore-Penrose
pseudoinverse. For compact operators as above with the singular system
$\{\sigma_{i},u_{i},v_{i}\}$ it has the representation
\[
x=A^{+}y=\sum_{i}\sigma_{i}^{-1}(y,v_{i})u_{i}
\]
for all $y\in\dom(A^{+})$. An immediate consequence is:
\begin{enumerate}[(i)]
\item  $\lVert A\rVert=\sigma_{1}$.
\item $A^{+}$ is bounded if and only if the number of singular values
is finite.
In that case it holds $\lVert A^{+}\rVert=\sigma_{\nu(A)}^{-1}.$
\end{enumerate}
The following lemma presents modifications of \cite[Theorem (8.15)]{LaHa74}.

\begin{lem}\label{ST}
Let $A,B:X\rightarrow Y$ be compact linear operators acting in the Hilbert spaces $X,Y$.
\begin{enumerate}[(i)]
 \item
Assume  $\dim X<\infty$ and $\nu(B)\leq \nu(A)=r$, $r\geq 1$. Moreover, let
\[
\lVert A^{+}\rVert\lVert B-A\rVert<1
\]
to hold and set
$\epsilon=\lVert B-A\rVert$. Then it holds $\nu(B)=\nu(A)$ and
\[
\lVert B^{+}\rVert\leq\frac{\lVert A^{+}\rVert}{1-\lVert A^{+}\rVert\lVert B-A\rVert}=\frac{1}{\sigma_{\nu(A)}-\epsilon}
\]
where $\sigma_{\nu(A)}$ is the smallest singular value of $A$.
\item Assume that $X$ decomposes in $X=X_{f}\oplus X_{f}^{\bot}$, $\dim X_{f}< \infty$, and 
\begin{align*}
 X_{f}^{\bot}=\ker A\subseteq \ker B,\quad \im B\subseteq\im A,\quad \lVert A^{+}\rVert\lVert B-A\rVert<1.
 \end{align*}
 Then it follows that
 \[
\lVert B^{+}\rVert\leq\frac{\lVert A^{+}\rVert}{1-\lVert A^{+}\rVert\lVert B-A\rVert}.
\]
\end{enumerate}
\end{lem}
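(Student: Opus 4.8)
The plan is to prove part (i) directly from the singular-value perturbation estimate of Lemma~\ref{cor:SV}, and then to deduce part (ii) by reducing it to (i) via restriction to the finite-dimensional subspace $X_f$.

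For (i): since $\dim X<\infty$, both $A$ and $B$ have only finitely many singular values, the Moore--Penrose inverses are bounded, and $\lVert A^{+}\rVert=\sigma_{\nu(A)}(A)^{-1}$, $\lVert B^{+}\rVert=\sigma_{\nu(B)}(B)^{-1}$. I would apply Lemma~\ref{cor:SV} with the operator having fewer singular values in the role of its ``$A$'', i.e.\ with our $B$ in that role (recall $\nu(B)\le\nu(A)=r$). This gives $\lvert\sigma_i(B)-\sigma_i(A)\rvert\le\epsilon$ for $i\le\nu(B)$ and $\sigma_i(A)\le\epsilon$ for $\nu(B)<i\le r$. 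If $\nu(B)<r$, then taking $i=r$ in the second group would yield $\sigma_r(A)=1/\lVert A^{+}\rVert\le\epsilon$, contradicting $\lVert A^{+}\rVert\,\epsilon<1$; hence $\nu(B)=\nu(A)=r$. Then the first group with $i=r$ gives $\sigma_r(B)\ge\sigma_r(A)-\epsilon=1/\lVert A^{+}\rVert-\epsilon>0$, so $\lVert B^{+}\rVert=1/\sigma_r(B)\le\lVert A^{+}\rVert/(1-\lVert A^{+}\rVert\epsilon)$, which is precisely the asserted bound (the final equality in the statement is just $\sigma_{\nu(A)}(A)=1/\lVert A^{+}\rVert$).

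For (ii): let $P$ be the orthoprojector onto $X_f$ and put $r=\dim X_f$. Since $X_f^{\bot}=\ker A\subseteq\ker B$, both operators annihilate $X_f^{\bot}$, so $A=\tilde AP$ and $B=\tilde BP$ with $\tilde A:=A|_{X_f}$, $\tilde B:=B|_{X_f}$ viewed as operators from the $r$-dimensional space $X_f$ into $Y$; in particular $\lVert B-A\rVert=\lVert\tilde B-\tilde A\rVert$. Because $\ker A=X_f^{\bot}$ exactly, $\tilde A$ is injective, $\nu(\tilde A)=r$, the range $\im A=\im\tilde A$ is finite-dimensional hence closed, and the minimal-norm least-squares solution of $Ax=y$ lies in $X_f$, which gives $A^{+}=\tilde A^{-1}P_{\im A}$ and therefore $\lVert A^{+}\rVert=\lVert\tilde A^{-1}\rVert=\lVert\tilde A^{+}\rVert$. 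Now $\nu(\tilde B)\le\dim X_f=r=\nu(\tilde A)$ and $\lVert\tilde A^{+}\rVert\,\lVert\tilde B-\tilde A\rVert=\lVert A^{+}\rVert\,\lVert B-A\rVert<1$, so part (i) applies to $\tilde A,\tilde B$ and yields $\nu(\tilde B)=r$ together with $\lVert\tilde B^{+}\rVert\le\lVert\tilde A^{+}\rVert/(1-\lVert\tilde A^{+}\rVert\lVert\tilde B-\tilde A\rVert)$. From $\nu(\tilde B)=r$ it follows that $\tilde B$ is injective on $X_f$, whence $\ker B=X_f^{\bot}$ and, exactly as for $A$, $\lVert B^{+}\rVert=\lVert\tilde B^{+}\rVert$; substituting back the identities $\lVert\tilde A^{+}\rVert=\lVert A^{+}\rVert$ and $\lVert\tilde B-\tilde A\rVert=\lVert B-A\rVert$ gives the assertion.

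The main obstacle is bookkeeping rather than conceptual: one must verify carefully that restriction to $X_f$ leaves all relevant quantities unchanged --- in particular $\lVert A^{+}\rVert=\lVert\tilde A^{+}\rVert$ and $\lVert B^{+}\rVert=\lVert\tilde B^{+}\rVert$, which rely on the minimal-norm solution living in $(\ker)^{\bot}\subseteq X_f$ and on $\im A$, $\im B$ being closed (guaranteed by $\dim X_f<\infty$) --- and keep straight which operator Lemma~\ref{cor:SV} permits to have fewer singular values. The hypothesis $\im B\subseteq\im A$ is not needed for the estimate itself; it is listed to match the situation (\ref{MP5}) in which the lemma is invoked.
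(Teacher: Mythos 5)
Your proof is correct and takes essentially the same route as the paper: part (i) is the same singular-value perturbation argument based on Lemma~A.\ref{cor:SV} (you are merely more explicit about which operator must play the role of the one with fewer singular values), and your part (ii) supplies exactly the restriction-to-$X_f$ reduction that the paper compresses into the single remark that (ii) is a consequence of (i). Your side observation that the hypothesis $\im B\subseteq\im A$ is not actually used for the estimate is also accurate.
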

\begin{proof}
It holds $\lVert A^{+}\rVert=\sigma_{\nu(A)}^{-1}$ such that,
by assumption, $\sigma_{i}-\epsilon>0$, $i=1,\ldots,\nu(A)$. Hence,
$B=A+(B-A)$ has at least $\nu(A)$ nonvanishing singular values because
of Lemma~A.\ref{cor:SV}. Hence, $\nu(B)\geq \nu(A)$. Together with the
assumption, this provides $\nu(B)=\nu(A)$. Consequently, $\lVert B^{+}\rVert\leq1/(\sigma_{\nu(A)}-\epsilon)$.
(ii) is a consequence of (i).
\end{proof}

\bibliographystyle{plain} 
\bibliography{hm}

\end{document}